\newtheorem{thm}{Theorem}[section]
\newtheorem{cor}[thm]{Corollary}
\newtheorem{remark}[thm]{Remark}
\newtheorem{ex}[thm]{Example}
\newcommand{\norm}[1]{\left\|#1\right\|}
\newcommand{\bb}[1]{\mathbb{#1}}
\newcommand{\cl}[1]{\mathcal{#1}}
\newcounter{egcounter}
\begin{document}

\title[A KS theorem in several variables]{A several variables Kowalski-S\l odkowski theorem for topological spaces}
\author{Jaikishan}
\address{Department Of Mathematics\\
      School of Natural Sciences\\
      Shiv Nadar Institution of Eminence\\
  Gautam Buddha Nagar\\
         Uttar Pradesh 201314, India}
\email{jk301@snu.edu.in}

\author{Sneh Lata}
\address{Department Of Mathematics\\
       School of Natural Sciences\\
      Shiv Nadar Institution of Eminence\\
  Gautam Buddha Nagar\\
         Uttar Pradesh 201314, India}
\email{sneh.lata@snu.edu.in}

\author{ Dinesh Singh}
\address{Centre For Digital Sciences\\
      O. P. Jindal Global University\\
   Sonipat\\
        Haryana 131001, India}
\email{dineshsingh1@gmail.com}

\subjclass[2020]{Primary 32A65; Secondary 32A35, 46E40, 47B38}

\keywords{Gleason-Kahane-$\dot{\rm Z}$elazko theorem, Kowalski and S\l odkowski theorem, multiplicative linear functional, Hardy spaces, multiplicative Gleason-Kahane-$\dot{\rm Z}$elazko theorem.}

\begin{abstract}
In this paper, we provide a version of the classical result of Kowalski and S\l odkowski that generalizes the famous Gleason-Kahane-$\dot{\rm Z}$elazko (GKZ) theorem by characterizing multiplicative linear functionals amongst all complex-valued functions on a Banach algebra. We first characterize maps on $\mathcal{A}$-valued polynomials of several variables that satisfy some conditions, motivated by the result of Kowalski and S\l odkowski, as a composition of a multiplicative linear functional on $\mathcal{A}$ and a point evaluation on the polynomials, where $\mathcal{A}$ is a complex Banach algebra with identity. We then apply it to prove an analogue of Kowalski and S\l odkowski's result on topological spaces of vector-valued functions of several variables. These results extend our previous work from \cite{jaikishan2024multiplicativity}; however, the techniques used differ from those used in \cite{jaikishan2024multiplicativity}. Furthermore, we characterize weighted composition operators between Hardy spaces over the polydisc amongst the continuous functions between them. Additionally, we register a partial but noteworthy success toward a multiplicative GKZ theorem for Hardy spaces. 
\end{abstract}

\maketitle
 
\section{Introduction}\label{intro}
Let $\cl A$ denote a complex Banach algebra with identity and $G(\mathcal{A})$  denote the set of invertible elements in $\mathcal{A}$. A non-zero linear functional $F$ defined on $\mathcal{A}$ is called multiplicative or a character if $F(xy)=F(x)F(y)$ for all $x,y \in \mathcal{A}$. It is easy to see that a multiplicative linear functional on $\mathcal{A}$ can never vanish on $G(\mathcal{A})$. Interestingly, the converse is also true, and it is the well-known Gleason-Kahane-$\dot{\rm Z}$elazko (GKZ)  theorem, which characterizes all multiplicative linear functionals on a complex unital Banach algebra.

\begin{thm}[GKZ theorem]\label{GKZ}
Let $F$ be a linear functional on a complex unital Banach algebra $\mathcal{A}$ with identity $e$ such that $F(e)=1$ and $F(x)\ne0$ for every $x\in G(\mathcal{A})$, then $F(xy)=F(x)F(y)$ for all $x,y\in\mathcal{A}$.
\end{thm}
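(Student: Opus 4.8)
The plan is to prove the theorem in three stages: show that $F$ is automatically continuous, deduce the weaker \emph{Jordan identity} $F(x^2)=F(x)^2$ by an entire-function argument, and then upgrade this to full multiplicativity by proving that $\ker F$ is a two-sided ideal. For the first stage, fix $x\in\mathcal{A}$ and put $\lambda=F(x)$; then $F(x-\lambda e)=F(x)-\lambda F(e)=0$, so by hypothesis $x-\lambda e\notin G(\mathcal{A})$, i.e. $\lambda\in\sigma(x)$. Since every point of $\sigma(x)$ has modulus at most $\norm{x}$, this gives $\mod{F(x)}\le\norm{x}$, so $F$ is bounded of norm one; in particular its action commutes with the convergent series defining $\exp$.

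The analytic core is the second stage. For fixed $x$ consider the entire function $\phi(\lambda)=F(\exp(\lambda x))$. Because $\exp(\lambda x)$ is invertible with inverse $\exp(-\lambda x)$, the hypothesis forces $\phi$ to be nowhere vanishing, while $\mod{\phi(\lambda)}\le\norm{\exp(\lambda x)}\le\exp(\mod{\lambda}\,\norm{x})$ shows $\phi$ has order at most one. By Hadamard's factorization theorem a zero-free entire function of order at most one must have the form $\phi(\lambda)=e^{a\lambda+b}$; reading off $\phi(0)=F(e)=1$ and $\phi'(0)=F(x)$ identifies it as $\phi(\lambda)=e^{F(x)\lambda}$. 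Expanding $\phi(\lambda)=\sum_{n\ge 0}\lambda^n F(x^n)/n!$ and comparing the coefficient of $\lambda^2$ with that of $e^{F(x)\lambda}$ yields $F(x^2)=F(x)^2$. Replacing $x$ by $x+y$ polarizes this to $F(xy+yx)=2F(x)F(y)$, and a short Jordan-product manipulation further gives $F(xyx)=F(x)^2F(y)$.

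The third stage is where I expect the main difficulty, since the identities just obtained are symmetric in their arguments whereas multiplicativity is not, and $\mathcal{A}$ need not be commutative. I would set $M=\ker F$, a closed hyperplane with $e\notin M$, and aim to show $M$ is a two-sided ideal, so that $F$ becomes the quotient map $\mathcal{A}/M\cong\mathbb{C}$. Writing each element as $F(z)e+z_0$ with $z_0\in M$, a direct expansion of $F(xy)$ reduces the whole statement to the single claim that $F(ab)=0$ whenever $a,b\in M$. For such $a,b$ the identity $F(xyx)=F(x)^2F(y)$ gives $F(ab^2a)=F(a)^2F(b^2)=0$ and $F(ba^2b)=F(b)^2F(a^2)=0$; since $(ab)(ba)=ab^2a$ and $(ba)(ab)=ba^2b$, applying the Jordan identity to the pair $p=ab$, $q=ba$ gives $2F(p)F(q)=F(pq+qp)=F(ab^2a)+F(ba^2b)=0$. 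Combined with $F(ab)=-F(ba)$, which follows from $ab+ba\in M$, this forces $F(ab)^2=0$ and hence $F(ab)=0$, completing the argument. The point I would stress is that the Jordan identity cannot be obtained algebraically: it rests entirely on combining the growth bound with the non-vanishing of $\phi$ through Hadamard's theorem, after which the non-commutative upgrade must be carried out by this explicit kernel computation rather than by appealing to commutativity.
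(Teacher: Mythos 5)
Your proposal is correct, but there is nothing in the paper to compare it against: the paper states the GKZ theorem as a known result and cites Gleason, Kahane--\.Zelazko, \.Zelazko, and Roitman--Sternfeld for its proofs, giving no argument of its own. What you have written is, in essence, the classical proof from those references: the spectral observation $F(x)\in\sigma(x)$ giving automatic continuity, the entire function $\phi(\lambda)=F(\exp(\lambda x))$ combined with Hadamard factorization to get $F(x^2)=F(x)^2$ (Gleason, Kahane--\.Zelazko), and then \.Zelazko's kernel computation to pass from the Jordan identity to full multiplicativity in the non-commutative case. I checked the details and they all go through: the polarization $F(xy+yx)=2F(x)F(y)$ is immediate; the identity $F(xyx)=F(x)^2F(y)$ follows by applying the polarized identity to the pair $x$ and $xy+yx$; and your kernel argument is sound, since for $a,b\in\ker F$ one has $(ab)(ba)=ab^2a$, $(ba)(ab)=ba^2b$, so the Jordan identity gives $2F(ab)F(ba)=F(ab^2a)+F(ba^2b)=F(a)^2F(b^2)+F(b)^2F(a^2)=0$, while $F(ab+ba)=2F(a)F(b)=0$ forces $F(ba)=-F(ab)$ and hence $F(ab)^2=0$. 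The only cosmetic slip is the claim that $F$ has norm exactly one, which requires $\norm{e}=1$; all you need is $\mod{F(x)}\le\norm{x}$, which you have. Note also that the Roitman--Sternfeld proof cited in the paper is a different, more elementary route that avoids Hadamard's theorem, so your remark that the Jordan identity ``cannot be obtained algebraically'' overstates the case, even though it is true that the standard proof rests on the complex-analytic step exactly as you describe.
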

	
The GKZ theorem was first proved by Gleason \cite{gleason1967characterization} and, independently, by Kahane and $\dot{\rm Z}$elazko \cite{kahane1968characterization} for commutative Banach algebras. Afterward, $\dot{\rm Z}$elazko \cite{zelazko1968characterization} extended it to the non-commutative case. An elementary proof of Theorem \ref{GKZ}, due to Roitman and Sternfeld, can be found in \cite{roitman1981linear}.

Around a decade later, Kowalski and S\l odkowski gave an interesting generalization of the GKZ theorem in \cite{kowalski1977characterization}. They proved the following characterization of multiplicative linear functionals amongst all complex-valued functions on complex unital Banach algebras without using the linearity condition.  

 \begin{thm}[KS theorem]\label{KLS}
 Let $\mathcal{A}$ be a complex unital Banach algebra with identity $e$ and let $F:\mathcal{A} \to \mathbb{C}$ be a map satisfying $F(0)=0$ and 
 \begin{equation}\label{KS}
 \big(F(x)-F(y)\big)e-(x-y)\notin G(\mathcal{A})
 \end{equation}
 for all $x,y\in\mathcal{A}$. Then $F$ is multiplicative and linear.
 \end{thm}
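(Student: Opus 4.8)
The plan is to first recast the hypothesis in spectral terms and then reduce the whole problem to a single complex variable. Writing $\sigma(z)$ for the spectrum of $z\in\mathcal{A}$, condition \eqref{KS} says precisely that $F(x)-F(y)\in\sigma(x-y)$ for all $x,y$, since $\alpha e-z\notin G(\mathcal{A})$ exactly when $\alpha\in\sigma(z)$. Taking $y=0$ gives $F(x)\in\sigma(x)$; in particular $F(e)=1$ (because $\sigma(e)=\{1\}$) and $|F(x)|\le\rho(x)\le\|x\|$, where $\rho$ denotes the spectral radius. Applying the same bound to $x-y$ yields $|F(x)-F(y)|\le\|x-y\|$, so $F$ is Lipschitz, hence continuous. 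Finally, comparing $x+\lambda e$ with $x$ gives $F(x+\lambda e)-F(x)\in\sigma(\lambda e)=\{\lambda\}$, i.e.\ the translation identity $F(x+\lambda e)=F(x)+\lambda$.

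The heart of the matter is to prove that $F$ is linear; once that is in hand, $F(e)=1$ together with the fact that $F$ cannot vanish on $G(\mathcal{A})$ (for invertible $x$ one has $0\notin\sigma(x)\ni F(x)$) lets the GKZ theorem (Theorem \ref{GKZ}) deliver multiplicativity and finish the proof. I would extract linearity from the following one-variable claim: for every $x,y\in\mathcal{A}$ the scalar function $\phi(\lambda)=F(x+\lambda y)$ is affine, $\phi(\lambda)=c\,\lambda+F(x)$. Granting this, setting $x=0$ and evaluating at $\lambda=1$ gives $F(\lambda y)=\lambda F(y)$ (homogeneity); and for general $x$, homogeneity rewrites $\phi(\lambda)/\lambda=F(y+x/\lambda)\to F(y)$ as $\lambda\to\infty$, which forces $c=F(y)$ and hence $F(x+y)=F(x)+F(y)$ (additivity).

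To prove the claim, observe that $\phi$ is continuous with at most linear growth, since $\phi(\lambda)\in\sigma(x+\lambda y)$ gives $|\phi(\lambda)|\le\rho(x+\lambda y)\le\|x\|+|\lambda|\,\|y\|$; by the Cauchy-estimate form of Liouville's theorem, any \emph{entire} function with this growth is automatically of the form $c\lambda+\phi(0)$. Thus everything hinges on showing that $\phi$ is holomorphic on $\mathbb{C}$, and this is the step I expect to absorb essentially all of the difficulty. It is exactly where the Banach-algebra structure must be used in full: the pointwise constraint that every chord $\big(\phi(\lambda)-\phi(\mu)\big)/(\lambda-\mu)$ lies in the fixed compact set $\sigma(y)$ does not by itself force holomorphy (for instance $\lambda\mapsto\bar\lambda$ has all chords on the unit circle), so one must additionally exploit that $\phi(\lambda)$ is a continuous selection of the values of the \emph{analytic multifunction} $\lambda\mapsto\sigma(x+\lambda y)$. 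I would establish holomorphy through the subharmonicity carried by the spectrum: for each $\zeta\in\mathbb{C}$ the map $\lambda\mapsto-\log\operatorname{dist}\big(\zeta,\sigma(x+\lambda y)\big)$ is subharmonic, and combined with the difference-quotient control this pins the continuous selection $\phi$ down to a holomorphic one.

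To summarize the skeleton: (i) translate \eqref{KS} into $F(x)-F(y)\in\sigma(x-y)$ and read off $F(e)=1$, continuity, and the translation identity; (ii) reduce linearity to the affineness of $\lambda\mapsto F(x+\lambda y)$; (iii) prove that this function is entire of at most linear growth, the one hard step, via the analytic-multifunction and subharmonicity properties of the spectrum; and (iv) deduce homogeneity and additivity, and conclude multiplicativity through the GKZ theorem.
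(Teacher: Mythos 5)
Your steps (i), (ii) and (iv) are correct and match the standard outline: condition \eqref{KS} is indeed equivalent to $F(x)-F(y)\in\sigma(x-y)$, which gives $F(e)=1$, the Lipschitz bound, the translation identity, and non-vanishing on $G(\mathcal{A})$; and once each slice function $\phi(\lambda)=F(x+\lambda y)$ is known to be affine, your deduction of homogeneity, then additivity, and finally multiplicativity via Theorem \ref{GKZ} is sound. (For context: the paper under review does not prove Theorem \ref{KLS} at all; it quotes it from Kowalski and S\l odkowski's article, so your attempt has to be measured against their published argument.)

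The genuine gap is your step (iii), and it is not a deferrable technicality: the package of facts you propose to use there is provably insufficient. You work only with slice data --- the selection property $\phi(\lambda)\in\sigma(x+\lambda y)$, the chord condition $\bigl(\phi(\lambda)-\phi(\mu)\bigr)/(\lambda-\mu)\in\sigma(y)$, and subharmonicity/analytic-multifunction properties of $\lambda\mapsto\sigma(x+\lambda y)$. Take $\mathcal{A}=C\bigl(\overline{D}(0,2)\bigr)$ with $x(t)=t$ and $y(t)=t/2$, so that $\sigma(x+\lambda y)=\overline{D}(0,|2+\lambda|)$ and $\sigma(y)=\overline{D}(0,1)$. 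The function $\phi(\lambda)=\overline{\lambda+2}$ satisfies every one of your constraints: it is a continuous ($1$-Lipschitz) selection of the analytic multifunction $\lambda\mapsto\sigma(x+\lambda y)$, and all of its difference quotients $\overline{(\lambda-\mu)}/(\lambda-\mu)$ have modulus $1$, hence lie in $\sigma(y)$; yet it is anti-holomorphic, so nowhere holomorphic and certainly not affine. Consequently no argument built only from the ingredients you list can ``pin the continuous selection down to a holomorphic one.'' Since the KS theorem is true, this $\phi$ does not extend to a map on all of $\mathcal{A}$ satisfying \eqref{KS}; what this shows is that any correct proof must exploit the hypothesis against elements of $\mathcal{A}$ far outside the affine slice $x+\mathbb{C}y+\mathbb{C}e$, which is precisely what Kowalski and S\l odkowski's delicate local analysis of the Lipschitz function $F$ does, and it is where essentially all of the work lies. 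A smaller inaccuracy in the same step: $\lambda\mapsto-\log\operatorname{dist}\bigl(\zeta,\sigma(x+\lambda y)\bigr)$ is subharmonic only on the open set where $\zeta\notin\sigma(x+\lambda y)$; on the graph it equals $+\infty$, a value a subharmonic function cannot take.
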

 
 Note that for $y=0$, Condition (\ref{KS}) yields $F(x)e-x \notin G(\mathcal{A})$ for every $x$ in $\mathcal{A}$, which for a linear $F$, is equivalent to $F$ being non-zero on 
 invertible elements in $\mathcal{A}$. Thus, the KS theorem generalizes the GKZ theorem. Interestingly, the above characterizations have analogues for function spaces that are not necessarily algebras. 

 To lend perspective to the discussion at hand, which has set the tone for our current work, we shall first discuss analogues of the GKZ theorem as proved in \cite{jaikishan2024multiplicativity, mashreghi2015gleason, sampat2021cyclicity}. Unless specified otherwise, we consider all vector spaces over the field of complex numbers and assume all the functions to be complex-valued.
 
Recently, Mashreghi and Ransford \cite{mashreghi2015gleason} extended the GKZ theorem to certain Banach spaces of analytic functions on the open unit disc $\mathbb{D}$. The following is their version for the Hardy spaces.
	
\begin{thm}\cite[Theorem 2.1]{mashreghi2015gleason}\label{MR1} For $0<p\le \infty$, let $F: H^{p}(\mathbb{D}) \rightarrow \mathbb{C}$ be a linear functional such that $F(1)=1$ and $F(g)\ne0$ for all $outer$ functions $g$ in $H^{p}(\bb D)$. Then there exists $w\in \mathbb{D}$ such that 
\begin{equation*}
F(f)=f(w) \ \ for \ all \ f\in H^{p}(\bb D).
\end{equation*}
\end{thm}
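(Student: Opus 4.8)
The plan is to prove the conclusion first on $H^\infty$, where a genuine multiplicative structure is available, and then transfer it to $H^p$ by density and continuity; throughout I treat $F$ as continuous (bounded), flagging below that securing this is the principal subtlety. Let $z$ denote the coordinate function and set $w := F(z)$. For every $\lambda$ with $|\lambda|\ge 1$ the polynomial $\lambda - z$ has no zeros in $\mathbb{D}$, so it is outer and the hypothesis gives $0 \ne F(\lambda - z) = \lambda F(1) - F(z) = \lambda - w$. Since this holds for all $|\lambda| \ge 1$, we must have $|w| < 1$, i.e. $w \in \mathbb{D}$. The goal is then to show $F(f) = f(w)$ for every $f$.

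The engine is a supply of outer functions coming from exponentials. Fix $h \in H^\infty$. For every $t \in \mathbb{C}$ the function $e^{th}\in H^\infty$ satisfies $|e^{th}| \ge e^{-|t|\,\|h\|_\infty} > 0$ on $\mathbb{D}$, so it is invertible in $H^\infty$ and hence outer; by hypothesis $\gamma(t) := F(e^{th}) \ne 0$ for all $t$. Writing $e^{th} = \sum_{n\ge 0} t^n h^n/n!$, a series convergent in $H^\infty$ and hence in $H^p$, and using continuity and linearity of $F$, we get that $\gamma(t) = \sum_{n\ge 0} F(h^n)\, t^n/n!$ is entire with $\gamma(0) = F(1) = 1$ and $\gamma'(0) = F(h)$. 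Moreover $|\gamma(t)| \le \|F\|\,\|e^{th}\|_\infty \le \|F\|\, e^{|t|\,\|h\|_\infty}$, so $\gamma$ has order at most one. A zero-free entire function of order at most one is, by the Hadamard factorization theorem, of the form $e^{at+b}$; the normalizations force $b \in 2\pi i\,\mathbb{Z}$ and $a = \gamma'(0) = F(h)$, so $\gamma(t) = e^{t F(h)}$. Comparing Taylor coefficients in $\sum_n F(h^n)\,t^n/n! = \sum_n F(h)^n\,t^n/n!$ yields $F(h^n) = F(h)^n$ for all $n$, and polarizing $F(h^2) = F(h)^2$ gives $F(gh) = F(g)F(h)$ for all $g,h \in H^\infty$. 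Thus $F$ restricts to a character on $H^\infty$.

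With multiplicativity in hand the identification is immediate. Given $h \in H^\infty$, the difference quotient $q := (h - h(w))/(z - w)$ lies in $H^\infty$, since $w \in \mathbb{D}$ keeps $z - w$ bounded away from zero near the boundary; hence $h - h(w) = (z - w)q$ and $F(h) - h(w) = F\big((z-w)q\big) = (F(z) - w)\,F(q) = 0$, so $F(h) = h(w)$ for all $h \in H^\infty$. This settles the case $p = \infty$. For $0 < p < \infty$ I would extend to all of $H^p$ using that $H^\infty$ (indeed the polynomials) is dense in $H^p$, together with the continuity of $F$ and of the evaluation $f \mapsto f(w)$ at the interior point $w$.

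The main obstacle is analytic rather than algebraic: the whole argument leans on $F$ being bounded, both to render $\gamma$ entire of finite order and to run the final density step. For $0 < p < 1$, where $H^p$ is not locally convex, one must be careful about the meaning of ``linear functional'' and must verify that boundedness is genuinely available (or derive it). The second delicate point is the chain $e^{th}$ outer $\Rightarrow$ $\gamma$ of order one $\Rightarrow$ $\gamma = e^{at+b}$, which requires pinning down the exponential-type growth bound and invoking Hadamard correctly. Once continuity and these estimates are in place, the exponential trick converts the purely analytic non-vanishing condition into honest multiplicativity, after which the division argument and the density extension close the proof.
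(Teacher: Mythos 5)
Your proposal does not prove the theorem as stated, because the statement has \emph{no continuity hypothesis}: $F$ is only assumed linear, and the entire point of this GKZ-type result is that continuity comes out as a conclusion (a point evaluation) rather than going in as an assumption. You invoke continuity at two essential places. The first --- making $\gamma(t)=F(e^{th})$ entire with an exponential growth bound --- is harmless, but only because boundedness of $F$ restricted to $H^\infty(\mathbb{D})$ is \emph{automatic} from the hypotheses: whenever $\|h\|_\infty<|\lambda|$, the function $\lambda-h$ is invertible in $H^\infty(\mathbb{D})$, hence outer, so $F(\lambda - h)=\lambda-F(h)\neq0$; letting $\lambda$ range over all such values gives $|F(h)|\le\|h\|_\infty$. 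You do not supply this argument, but with it inserted, your Hadamard/polarization step and the division trick $h-h(w)=(z-w)q$ are correct and in effect re-prove the GKZ theorem for the algebra $H^\infty(\mathbb{D})$ (the original Mashreghi--Ransford proof simply cites GKZ here). The second use of continuity is fatal: for $0<p<\infty$ you propose to pass from $H^\infty(\mathbb{D})$ to $H^p(\mathbb{D})$ by ``density plus continuity of $F$,'' but $H^p$-continuity of $F$ is not a hypothesis, is not automatic, and cannot be derived in advance --- $H^p(\mathbb{D})$ is not an algebra, so the invertibility argument above has no analogue there, and since the conclusion of the theorem is precisely that $F$ is a (continuous) point evaluation, assuming continuity at this step is circular. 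You flag this yourself as something to ``verify or derive,'' but no derivation exists along these lines; the proof must be restructured.

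The missing step is algebraic, not topological: one propagates multiplicativity across the module action of $H^\infty(\mathbb{D})$ on $H^p(\mathbb{D})$. For a fixed outer $f\in H^p(\mathbb{D})$, the functional $h\mapsto F(hf)$ on $H^\infty(\mathbb{D})$ is linear, nonzero at $h=1$, and nonvanishing on invertible $h$ (since $hf$ is then outer in $H^p(\mathbb{D})$), so GKZ --- or your exponential argument, which is again legitimate by automatic boundedness on $H^\infty(\mathbb{D})$ --- yields a character $\chi_f$ with $F(hf)=\chi_f(h)F(f)$ for all $h\in H^\infty(\mathbb{D})$. Writing an outer $f\in H^p(\mathbb{D})$ as $f=O_1/O_2$ with $O_1,O_2$ outer in $H^\infty(\mathbb{D})$ (take boundary modulus $|O_2|=1/\max(1,|f|)$, $O_1=O_2f$), one checks that $\chi_f$ is evaluation at the same point $w$ and that $F(f)=O_1(w)/O_2(w)=f(w)$; finally an arbitrary $f=I\cdot O\in H^p(\mathbb{D})$ ($I$ inner, $O$ outer) is the linear combination $(I+2)O-2O$ of two outer functions, so plain linearity gives $F(f)=f(w)$. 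No topology on $H^p(\mathbb{D})$ enters anywhere. This is the route of the cited Mashreghi--Ransford proof (their module GKZ theorem), and the same $O_1/O_2$ factorization device appears in this paper's proof of Theorem \ref{SD-improved}; your exponential trick is a genuine alternative only for the $H^\infty$ stage of the argument.
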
 
 
This is a genuine counterpart of the GKZ theorem for Hardy spaces, as any function $f$ in the Hardy space with an $g$ in the same space such that $fg=1$ must be outer. Moreover, a point evaluation 
is multiplicative whenever the multiplication is well-defined. In the same paper, they also gave the following analogous result for a much broader class of Banach spaces. 

\vspace{.2 cm}

Let $Hol(\mathbb{D})$ denote the set of all holomorphic functions on the open unit disc $\mathbb{D}$, and 
let $\cl X\subseteq Hol(\mathbb{D})$ be a Banach space with the following properties:

\vspace{.2 cm}

\begin{enumerate}[$(\cl X1)$,nosep]
\item for each $w\in \bb D$, the map $f \mapsto f(w)$ is continuous;
\item $\cl X$ contains all polynomials, and they are dense in $\cl X$;
\item multiplication with the function $z$ is well-defined on $\cl X$.
\end{enumerate}

\vspace{.2 cm}

Further, suppose $\cl W$ is a subset of $\cl X$ which satisfies the following properties:  

\vspace{.2 cm}

\begin{enumerate}[$(\cl W1)$,nosep]
\item if $f\in \cl X$ is bounded and $|f|$ is bounded away from zero on $\mathbb{D}$, then $f\in \cl W$;
\item $\cl W$ contains the linear polynomials $g(z)=z-\lambda, \ |\lambda|=1$.
\end{enumerate}

\begin{thm}\label{MR2}\cite[Theorem 3.1]{mashreghi2015gleason}
Let $\cl X\subseteq Hol(\mathbb{D})$ be a Banach space and let $\cl W$ be a subset of $\cl X$. Suppose they satisfy the above-stated properties $(\cl X1)-(\cl X3)$ and $(\cl W1)-(\cl W2)$, respectively. If $F: \cl X \to \mathbb{C}$ is a continuous linear functional such that $F(1)=1$ and it never vanishes on the set $\cl W$, then there exists $w\in\bb D$ such that $F(f)=f(w)$ for all $f\in \cl X$.
\end{thm}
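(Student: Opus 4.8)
The plan is to produce a single point $w\in\mathbb{D}$ and to show $F=\mathrm{ev}_w$ on a dense set. First I would record the reduction. Since the polynomials are dense in $\mathcal{X}$ by $(\mathcal{X}2)$ and both $F$ and every point evaluation $\mathrm{ev}_w$ are continuous on $\mathcal{X}$ (the latter by $(\mathcal{X}1)$), it suffices to find $w\in\mathbb{D}$ with $F(p)=p(w)$ for every polynomial $p$. Writing $p(z)-p(w)=(z-w)q(z)$ and using $(\mathcal{X}3)$ (so that $(z-w)\mathcal{X}\subseteq\mathcal{X}$), the equality $F(p)=p(w)$ for all $p$ is equivalent to $F\big((z-w)z^{k}\big)=0$ for all $k\ge 0$, i.e.\ to the shift relation $F(z^{k+1})=wF(z^{k})$, and hence by induction to $F(z^{n})=w^{n}$ for all $n$. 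Thus the whole theorem reduces to computing the ``moments'' $F(z^n)$.

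Next I would extract the one piece of genuinely useful information that $(\mathcal{W}1)$ provides, a spectral/range property: for every \emph{bounded} $f\in\mathcal{X}$ one has $F(f)\in\overline{f(\mathbb{D})}$. Indeed, if $F(f)$ lay outside $\overline{f(\mathbb{D})}$ then $f-F(f)\cdot 1$ would be bounded and bounded away from zero on $\mathbb{D}$, hence in $\mathcal{W}$ by $(\mathcal{W}1)$, forcing $F\big(f-F(f)\big)\ne 0$; but $F\big(f-F(f)\big)=F(f)-F(f)F(1)=0$, a contradiction. Applying this to $f=z$ gives $w:=F(z)\in\overline{\mathbb{D}}$, and for $|\lambda|=1$ property $(\mathcal{W}2)$ gives $F(z-\lambda)=w-\lambda\ne 0$, so $w\notin\partial\mathbb{D}$; hence $w\in\mathbb{D}$. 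This is the sole role I expect $(\mathcal{W}2)$ to play.

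For the core computation I would introduce the resolvent generating function. Since multiplication by $z$ is a bounded operator $M_z$ on $\mathcal{X}$ (closed graph, using $(\mathcal{X}1)$ and $(\mathcal{X}3)$), for $|c|$ large the Neumann series shows $(c-z)^{-1}=\sum_{n\ge0}c^{-n-1}z^{n}\in\mathcal{X}$, and for $|c|>1$ this function is bounded and bounded away from zero on $\mathbb{D}$, hence in $\mathcal{W}$. Therefore
\[
R(c):=F\big((c-z)^{-1}\big)=\sum_{n\ge0}c^{-n-1}F(z^{n})
\]
is holomorphic and non-vanishing for $|c|$ large, and the target $F(z^n)=w^n$ is exactly $R(c)=\tfrac{1}{c-w}$, equivalently $F\big(\tfrac{z-w}{c-z}\big)=0$. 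The same non-vanishing principle, applied now to the zero-free polynomials $\prod_j(z-a_j)$ with $|a_j|>1$ (which lie in $\mathcal{W}$ by $(\mathcal{W}1)$) and to their products with resolvents, would be my engine. I would combine the clean identity $F\big((z-w)^{n}\big)=F(z^{n})-w^{n}$, which follows from the binomial expansion together with the inductive hypothesis $F(z^{k})=w^{k}$ for $k<n$ and the relation $(w-w)^n=0$, with these non-vanishing constraints to force $F(z^n)-w^n=0$ inductively.

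The main obstacle is precisely this last step, and I do not expect it to be routine. The difficulty is that $(\mathcal{W}1)$ delivers only the closure-of-range containment $F(f)\in\overline{f(\mathbb{D})}$, which confines $R$ to a region — one checks it forces $|c-1/R(c)|\le 1$ with $1/R(c)=c-w+o(1)$ — but a bounded analytic function tending to $w$ at infinity need not be constant, so this alone does \emph{not} yield $R(c)=\tfrac1{c-w}$. Closing the gap requires manufacturing, from the assumption $F(z^n)\ne w^n$, an honest bounded zero-free function in $\mathcal{X}$ that $F$ annihilates, contradicting non-vanishing on $\mathcal{W}$. The natural candidate is a suitable product of linear factors $\prod_j(z-a_j)$, $|a_j|>1$, whose existence amounts to realizing a prescribed ratio of values at two (or more) points of $\mathbb{D}$ by such zero-free polynomials; carrying this out — in effect substituting for the exponential/Hadamard step of the classical Gleason–Kahane–$\dot{\mathrm Z}$elazko argument, which is unavailable because $\mathcal{X}$ need not be an algebra — is where the real work lies, and where I would concentrate the effort, using the density of polynomials from $(\mathcal{X}2)$ and the module structure from $(\mathcal{X}3)$ to keep all auxiliary functions inside $\mathcal{X}$.
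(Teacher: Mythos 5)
Your preliminary steps are sound, and they are genuinely the right way to begin: the reduction (via density of polynomials, continuity of $F$ and of point evaluations, and $(\mathcal{X}3)$) to proving $F(z^n)=w^n$ where $w:=F(z)$; the range inclusion $F(f)\in\overline{f(\mathbb{D})}$ for bounded $f\in\mathcal{X}$, extracted from $(\mathcal{W}1)$; and the localization $w\in\mathbb{D}$, with $(\mathcal{W}2)$ excluding the boundary, are all correct. But the proposal is not a proof. By your own admission, the central step --- deducing $F(z^n)=w^n$ from the non-vanishing hypothesis --- is left as a program (``where the real work lies''), and the constraints you actually extract are, as you yourself observe, insufficient: the resolvent bound $|c-1/R(c)|\le 1$ does not force $R(c)=1/(c-w)$, and products of finitely many linear factors $\prod_j(z-a_j)$ with $|a_j|>1$ do not obviously yield the needed contradiction. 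So the heart of the theorem is missing. (For the record, the paper under review does not prove this statement either; it is quoted from Mashreghi--Ransford, so the comparison below is with the method of that cited source.)

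More importantly, the diagnosis you give for why the classical route fails --- that the exponential/Hadamard step is ``unavailable because $\mathcal{X}$ need not be an algebra'' --- is mistaken, and correcting it is exactly what closes your gap; this is the module trick of Mashreghi--Ransford. You already proved, via the closed graph theorem, that $M_z$ is a bounded operator on $\mathcal{X}$. Hence for every polynomial $p$ the operator exponential $\exp\bigl(\lambda\, p(M_z)\bigr)$ is a bounded operator on $\mathcal{X}$, and applying it to the element $1\in\mathcal{X}$ shows that the function $e^{\lambda p}$ lies in $\mathcal{X}$: the series $\sum_k \lambda^k p^k/k!$ converges in $\mathcal{X}$, hence pointwise by $(\mathcal{X}1)$, to $e^{\lambda p}$. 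Each $e^{\lambda p}$ is bounded and bounded away from zero on $\mathbb{D}$ (since $\mathrm{Re}(\lambda p)$ is bounded), so $e^{\lambda p}\in\mathcal{W}$ by $(\mathcal{W}1)$ and $F(e^{\lambda p})\ne 0$. Moreover $\lambda\mapsto F(e^{\lambda p})$ is entire with $|F(e^{\lambda p})|\le \|F\|\,\|1\|\,e^{|\lambda|\,\|p(M_z)\|}$, i.e.\ zero-free of exponential type, so Hadamard's factorization gives $F(e^{\lambda p})=e^{\beta\lambda}$ (the constant term is $F(1)=1$). Comparing Taylor coefficients yields $F(p^k)=\beta^k=F(p)^k$ for all $k\ge 0$; taking $p=z$ gives $F(z^n)=w^n$, which by your own reduction and the density argument finishes the proof. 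In other words, the algebra structure you thought was lacking is supplied by the operator algebra generated by $M_z$ acting on the cyclic vector $1$; $(\mathcal{X}3)$ is not merely a device to ``keep auxiliary functions inside $\mathcal{X}$'' but the mechanism that manufactures the exponentials on which the classical Gleason--Kahane--$\dot{\mathrm{Z}}$elazko argument runs.
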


Inspired by the work mentioned above of Mashreghi and Ransford, the authors of this article proved a new analogue of the GKZ theorem in \cite{jaikishan2024multiplicativity}. This analogue is interesting because we dropped the condition that requires the vector space to be a Banach space. In fact, we assumed the vector space has a topology that is not necessarily related to its algebraic structure. More precisely, we proved the following:
 
 Let $\mathcal{F}(r,\mathbb{C})$ denote the family of complex-valued functions defined on the open disc $D(0,r)$ in $\mathbb{C}$ with center at $0$  and radius $r$.
 
\begin{thm}\label{JLS}\cite[Theorem B]{jaikishan2024multiplicativity} Let $\cl X\subset \mathcal{F}(r,\mathbb{C})$ be a complex vector space equipped with a topology that satisfies the following properties:
\begin{enumerate}[nosep]
\item for each $w\in D(0,r)$, the map $f \mapsto f(w)$ is continuous;
\item $\cl X$ contains the set of complex polynomials as a dense subset. 
\end{enumerate}
Let $\cl Y=\{(z-\lambda)^n: n\ge 0, \ \lambda\in \bb C, \  |\lambda|\ge r\}$. If $F: \cl X \to \mathbb{C}$ is a continuous linear functional such that $F(1)=1$ and $F(g)\ne0$ for all $g\in \cl Y$, then there exists $w \in D(0,r)$ such that 
\begin{equation*}
F(f)=f(w)
\end{equation*}
for all $f\in \cl X$.
\end{thm}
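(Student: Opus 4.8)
The plan is to show that $F$ coincides with evaluation at the point $w := F(z)$. Since the polynomials are dense in $\cl X$ and, by hypothesis (1), the functional $f \mapsto f(w)$ is continuous whenever $w \in D(0,r)$, it suffices to prove two things: that $w$ actually lies in $D(0,r)$, and that $F(p) = p(w)$ for every polynomial $p$. The first is immediate, since $\cl Y$ contains the linear polynomials $z-\lambda$ with $|\lambda|\ge r$ (the case $n=1$), so $F(z-\lambda)\ne 0$ reads $F(z)\ne\lambda$ for all $|\lambda|\ge r$, forcing $|w|<r$. Writing $a_n := F(z^n)$ (so $a_0=1$, $a_1=w$) and using linearity, the remaining task reduces to the single assertion that $a_n = w^n$ for all $n$.

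To treat all degrees simultaneously I would introduce, for each $n\ge0$, the polynomial
\[
P_n(\lambda) := F\big((z-\lambda)^n\big) = \sum_{k=0}^n \binom{n}{k} a_k (-\lambda)^{n-k},
\]
of degree $n$ in $\lambda$ with leading coefficient $(-1)^n$. The hypothesis that $F$ does not vanish on $\cl Y$ says exactly that $P_n(\lambda)\ne 0$ whenever $|\lambda|\ge r$, i.e.\ all $n$ zeros of $P_n$ lie in $D(0,r)$; in particular $|a_n|=|P_n(0)|$ equals the product of the moduli of these zeros, hence $|a_n|<r^n$. Consequently the generating function $A(t):=\sum_{n\ge0} a_n t^n/n!$ is entire of exponential type at most $r$, with $A(0)=1$ and $A'(0)=w$, and a term-by-term Cauchy product yields the identity
\[
\sum_{n\ge0}\frac{P_n(\lambda)}{n!}\,t^n = A(t)\,e^{-\lambda t}.
\]
Since $a_n=w^n$ for all $n$ is equivalent to $A(t)=e^{wt}$, and $A$ has order at most $1$, by Hadamard's factorization theorem it is enough to prove that $A$ has no zeros; zero-freeness together with $A(0)=1$ and $A'(0)=w$ then forces $A(t)=e^{wt}$.

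This zero-freeness is the heart of the matter, and I would obtain it by transferring the constraint on $P_n$ to the Jensen polynomials $J_n(\mu):=\sum_{k=0}^n \binom{n}{k}a_k\mu^k$ of $A$. The reflection identity $P_n(\lambda)=(-\lambda)^n J_n(-1/\lambda)$ shows that $P_n$ having all its zeros in $D(0,r)$ is equivalent to $J_n$ having \emph{no} zeros in the closed disc $\{|\mu|\le 1/r\}$ (the value $\mu=0$ being covered by $J_n(0)=a_0=1\ne0$). On the other hand, using $\binom{n}{k}/n^k\to 1/k!$ together with the bound $|a_k|\le r^k$, a routine dominated-convergence estimate gives Pólya's classical fact that $J_n(\lambda/n)\to A(\lambda)$ locally uniformly on $\bb C$. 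Now suppose, for contradiction, that $A(t_0)=0$ for some $t_0$ (necessarily $t_0\ne0$). By Hurwitz's theorem the functions $\lambda\mapsto J_n(\lambda/n)$ must have zeros $\lambda_n\to t_0$ for all large $n$; but then $\mu_n:=\lambda_n/n\to 0$ are zeros of $J_n$ lying in $\{|\mu|\le 1/r\}$ for large $n$, contradicting the previous sentence. Hence $A$ is zero-free, so $A(t)=e^{wt}$, giving $a_n=w^n$, whence $F(p)=p(w)$ on polynomials; density and the continuity of $F$ and of evaluation at $w$ then extend this to all of $\cl X$.

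The main obstacle is precisely the passage to zero-freeness of $A$: the per-degree conditions are far too weak in isolation (for any single $n$, requiring the zeros of $P_n$ to lie in $D(0,r)$ still permits a whole range of values of $a_n$), so the argument must couple all degrees at once. The reflection into Jensen polynomials, combined with the scaling limit $J_n(\lambda/n)\to A(\lambda)$ and Hurwitz's theorem, is exactly what converts the global-in-$n$ hypothesis into a clean contradiction; verifying the local uniform convergence and the Hurwitz zero count carefully is where the genuine work lies.
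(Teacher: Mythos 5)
Your proof is correct, and all the key steps check out: the bound $|a_n|\le r^n$ from the fact that $P_n$ is (up to sign) monic with all zeros in $D(0,r)$; the reflection identity $P_n(\lambda)=(-\lambda)^nJ_n(-1/\lambda)$ translating the hypothesis into zero-freeness of $J_n$ on $\{|\mu|\le 1/r\}$; the dominated-convergence proof of the P\'olya limit $J_n(\lambda/n)\to A(\lambda)$ locally uniformly; the Hurwitz contradiction (zeros $\mu_n=\lambda_n/n\to 0$ would violate that zero-freeness); and the Hadamard step forcing $A(t)=e^{wt}$, hence $a_n=w^n$, with the final extension by density and Hausdorffness of $\mathbb{C}$. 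Note, however, that this paper never proves the statement: Theorem \ref{JLS} is quoted from the authors' earlier work \cite{jaikishan2024multiplicativity}, so there is no in-paper proof to compare against. What can be compared is machinery: the technique this paper uses for its generalizations (Theorems \ref{Result1} and \ref{2.3}) is to complete $\mathcal{P}_n^{\mathcal{A}}$ in the sup norm on $\overline{D}$ into a Banach algebra and invoke the Kowalski--S\l odkowski theorem there; that route needs the stronger KS-type zero condition on \emph{all} pairs of polynomials and therefore does not directly yield Theorem \ref{JLS}, whose hypothesis is only non-vanishing on the thin set $\cl Y$ (plus linearity). Your argument genuinely handles that weaker hypothesis by coupling all degrees at once through the generating function $A(t)=\sum F(z^n)t^n/n!$ — in spirit close to the entire-function methods of Kou--Liu and Sampat cited in the introduction (where $A(t)$ would be $F(e^{tz})$, here defined formally since exponentials need not lie in $\cl X$) — at the cost of heavier complex-analytic tools (P\'olya's theorem on Jensen polynomials, Hurwitz, Hadamard factorization), whereas the paper advertises its source's proof and its own arguments as elementary.
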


 Notice that Theorem \ref{JLS} is, in a spirit, similar to Theorems \ref{MR1} and \ref{MR2}. The point to be also noted about Theorem \ref{JLS}, in addition to what has been stated above, is that the hypotheses involve conditions on only a small and special class of outer functions, namely, powers of linear outer polynomials. Furthermore, Theorem \ref{JLS} gave a version of Theorem \ref{MR1}, but with a restriction of continuity on the linear functional, which has been justified in \cite{jaikishan2024multiplicativity} with an example. It is also worth noting that unlike the proofs of Theorems \ref{MR1} and \ref{MR2}, as given in \cite{mashreghi2015gleason}, the proof of Theorem \ref{JLS} in \cite{jaikishan2024multiplicativity} does not rely on the GKZ theorem.

 In \cite{sampat2021cyclicity}, Sampat obtained a version of the GKZ theorem for function spaces in the several variables setting. This result has its roots in \cite{kou2017linear} and \cite{mashreghi2015gleason}. In \cite{kou2017linear}, Kou and Liu characterized the bounded linear operators on $H^p(\bb D), \ 1<p<\infty$ that preserve outer functions as weighted composition operators. In this process, Kou and Liu \cite{kou2017linear} gave yet another proof of Theorem \ref{MR1} in which they worked with a specific subclass of outer functions, namely, $\{e^{wz}: w\in\mathbb{C}\}$. Furthermore, 
 their proof does not rely on the GKZ theorem, and the result assumes the continuity of the linear functional. Sampat extended Kou and Liu's ideas to the several variables setting in \cite{sampat2021cyclicity}. The following is his analogue of the GKZ theorem. For an open set $U \subset \mathbb{C}^n$, let $\cl X \subset Hol(U)$ be a Banach space satisfying the following properties:

\vspace{.2 cm}

\begin{enumerate}[(JS1), nosep]
\item The set of polynomials is dense in $\cl X$;
\item For each $z\in U,$ the point evaluation map $f\mapsto f(z)$ is bounded on $\cl X$. Furthermore, $U$ contains each $z \in \bb C^n$ for which the map $p\mapsto p(z)$ on the set of polynomials extends to a bounded linear functional on all of $\cl X$;
\item The $i^{th}$- shift operator $S_i: \cl X \to \cl X$, defined as $S_i f(z):=z_if(z)$ for every $(z_k)_{k=1}^n=z\in U$ and $f\in \cl X$, is a bounded linear operator for all $1\le i\le n$.
\end{enumerate}

\begin{thm}\label{Sam}\cite[Theorem 1.4]{sampat2021cyclicity}
Let $\cl X$ be a Banach space that satisfies the above properties $(JS1)-(JS3)$ over an open set $U\subset \mathbb{C}^n$. Let $\Lambda : \cl X \to \mathbb{C}$ be a continuous linear functional such that $\Lambda(e^{w\cdot z})\neq 0$ for every $w\in \mathbb{C}^n$. Then there exist a non-zero $ \alpha \in \mathbb{C}$  and $w\in U$ such that
\begin{equation*}
\Lambda(f)=\alpha f(w) \quad \text{for all } f\in \cl X.
\end{equation*}
\end{thm}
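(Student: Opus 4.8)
The plan is to encode $\Lambda$ into the single-variable analytic behaviour of the generating function $\phi(w) := \Lambda(e^{w\cdot z})$, $w\in\bb C^n$, and then to recover $\Lambda$ from a Hadamard-type factorization of $\phi$. First I would check that each exponential actually lives in $\cl X$, so that the hypothesis makes sense. Writing $z^{\beta}=S_1^{\beta_1}\cdots S_n^{\beta_n}\mathbf 1$ for a multi-index $\beta$ and setting $M_i:=\norm{S_i}$, property $(JS3)$ gives $\norm{z^\beta}_{\cl X}\le \norm{\mathbf 1}\prod_i M_i^{\beta_i}$. Hence the Taylor series $\sum_\beta \tfrac{w^\beta}{\beta!}\,z^\beta$ of $e^{w\cdot z}$ converges absolutely in the Banach space $\cl X$, with $\norm{e^{w\cdot z}}_{\cl X}\le \norm{\mathbf 1}\prod_i e^{M_i\mod{w_i}}$; continuity of point evaluations $(JS2)$ identifies this $\cl X$-limit with the holomorphic function $z\mapsto e^{w\cdot z}$ on $U$.

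Next, since $\Lambda$ is continuous and linear, it may be pulled through the convergent series, giving $\phi(w)=\sum_\beta \tfrac{\Lambda(z^\beta)}{\beta!}\,w^\beta$, an entire function on $\bb C^n$. The norm bound above forces the growth estimate $\mod{\phi(w)}\le \norm{\Lambda}\,\norm{\mathbf 1}\,e^{M\sum_i\mod{w_i}}$ with $M=\max_i M_i$, so $\phi$ is entire of exponential type; by hypothesis it is nowhere vanishing.

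The core step, and the one I expect to be the main obstacle, is the several-variable factorization: a nowhere-vanishing entire function of exponential type on $\bb C^n$ must be the exponential of an affine function. Since $\bb C^n$ is simply connected I would write $\phi=e^{g}$ for some entire $g$, and then control $g$ by restricting to lines: for fixed $w_0,v\in\bb C^n$ the map $t\mapsto \phi(w_0+tv)$ is a nowhere-zero entire function of exponential type in one variable, hence by the classical Hadamard factorization equals the exponential of a degree-$\le 1$ polynomial in $t$, so $g(w_0+tv)$ is affine in $t$. As this holds along every complex line, $g$ is a polynomial of total degree at most $1$, say $g(w)=a\cdot w+b$, and therefore $\phi(w)=e^{a\cdot w+b}$. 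The delicate points here are justifying the interchange with the infinite series, extracting the exponential type of each line restriction uniformly, and the line-by-line passage back to an honest affine $g$.

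Comparing the two power-series expressions for $\phi$ then yields $\Lambda(z^\beta)=e^{b}\,a^\beta$ for every $\beta$, so that $\Lambda(p)=\alpha\,p(a)$ for all polynomials $p$, where $\alpha:=e^{b}\neq 0$. Finally, since $p\mapsto p(a)=\alpha^{-1}\Lambda(p)$ is the restriction to polynomials of the bounded functional $\alpha^{-1}\Lambda$, the second half of $(JS2)$ places $a\in U$; then the point evaluation $\mathrm{ev}_a$ is continuous on $\cl X$, and as the two continuous functionals $\Lambda$ and $\alpha\cdot\mathrm{ev}_a$ agree on the dense $(JS1)$ set of polynomials they agree on all of $\cl X$. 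Setting $w:=a\in U$ gives $\Lambda(f)=\alpha f(w)$ for every $f\in\cl X$, completing the argument.
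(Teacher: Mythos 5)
This theorem is not actually proved in the present paper: it is quoted as background, with the proof residing in Sampat's original article \cite{sampat2021cyclicity}, and the authors here only remark that that proof ``uses some technical complex function theory results.'' Your argument is correct and is essentially the same route as the original source: you show $w\mapsto\Lambda(e^{w\cdot z})$ is a zero-free entire function of exponential type, invoke Hadamard factorization (via restriction to complex lines) to write it as $e^{a\cdot w+b}$, identify $\Lambda(p)=e^{b}p(a)$ on polynomials, use the second half of $(JS2)$ to place $a\in U$, and conclude by density $(JS1)$ and continuity — precisely the complex-function-theoretic approach the present paper contrasts with its own elementary methods.
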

	 
We now focus on an analogue of the KS theorem for function spaces. By following the ideas of Mashreghi and Ransford from \cite{mashreghi2015gleason}, Sebastian and Daniel extended the KS theorem to modules in \cite[Theorem 2.3]{sebastian2021weaker}; then they used it to obtain the following generalization of Theorem \ref{MR1}.

\begin{thm}\label{SD-h}\cite[Theorem 3.1]{sebastian2021weaker}
Let $1\le p\le \infty$ and $F: H^p(\bb D) \rightarrow \mathbb{C}$ be a function such that $F(0)=0, \ F(1)\ne 0,$ and 
\begin{equation}
\big(F(f_1)-F(f_2)\big)1-F(1)(f_1-f_2)\notin S
\end{equation}
for all $f_1, \ f_2\in H^p(\bb D),$ where $S$ is the set of all outer functions in $H^p(\bb D)$. Then there exists a unique character $\chi$ on $H^\infty(\bb D)$ such that 
\begin{equation*}
F(fg)=\chi(f)F(g)
\end{equation*}	
for all $f\in H^\infty(\bb D)$ and $g\in H^p(\bb D).$ Further, there exist a non-zero constant $c$ and $w\in \bb D$ such that $F(g)=cg(w)$ for all $g\in H^\infty(\bb D).$		
\end{thm}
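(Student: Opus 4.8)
The plan is to split the statement into two independent parts: the behaviour of $F$ on the subalgebra $H^\infty(\bb D)$, and the module action of $H^\infty(\bb D)$ on $H^p(\bb D)$. Throughout I would normalise by putting $\tilde F:=F/F(1)$, so that $\tilde F(0)=0$, $\tilde F(1)=1$, and, since a nonzero scalar multiple of an outer function is outer, the hypothesis reads
\begin{equation*}
\big(\tilde F(f_1)-\tilde F(f_2)\big)1-(f_1-f_2)\notin S\qquad\text{for all }f_1,f_2\in H^p(\bb D).
\end{equation*}
Taking $f_2=0$ records the useful special case $\tilde F(f)1-f\notin S$ for every $f$.

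First I would treat $H^\infty(\bb D)$. For $f_1,f_2\in H^\infty(\bb D)$ the element $\big(\tilde F(f_1)-\tilde F(f_2)\big)1-(f_1-f_2)$ lies in $H^\infty(\bb D)$, and every invertible element of the Banach algebra $H^\infty(\bb D)$ is bounded away from zero and hence outer; consequently ``not outer'' forces ``not invertible''. Thus $\tilde F|_{H^\infty(\bb D)}$ satisfies exactly the hypotheses of the KS theorem (Theorem \ref{KLS}) on the unital Banach algebra $H^\infty(\bb D)$, so $\chi:=\tilde F|_{H^\infty(\bb D)}$ is a character. To identify $\chi$, set $w:=\chi(z)$; the special case above gives that $w1-z$, equivalently $z-w$, is not outer, and since $z-w$ is outer precisely when $|w|\ge 1$, we get $w\in\bb D$. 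Now for any $g\in H^\infty(\bb D)$ the difference quotient $h:=(g-g(w))/(z-w)$ is holomorphic on $\bb D$ and, being bounded on $\partial\bb D$ by $2\norm{g}_\infty/(1-|w|)$, lies in $H^\infty(\bb D)$ by the maximum principle; applying the character to $g-g(w)1=(z-w)h$ and using $\chi(z)=w$ yields $\chi(g)=g(w)$. Undoing the normalisation gives $F(g)=F(1)\,g(w)$ for all $g\in H^\infty(\bb D)$, which is the final assertion with $c=F(1)$ and this $w$; uniqueness of $\chi$ is forced because the sought module identity at $g=1$ compels $\chi(f)=F(f)/F(1)=f(w)$.

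The main obstacle is the module identity $\tilde F(fg)=f(w)\tilde F(g)$ for $f\in H^\infty(\bb D)$, $g\in H^p(\bb D)$, which does not reduce to Theorem \ref{KLS} in one step: dividing the hypothesis applied to the pair $(f_1g,f_2g)$ by $\tilde F(g)$ produces the second term $(f_1-f_2)g/\tilde F(g)$ rather than $(f_1-f_2)$, so the twist by $g$ obstructs a direct appeal to the scalar theorem. Instead I would prove a Kowalski--S\l odkowski theorem for the module $H^p(\bb D)$ over $H^\infty(\bb D)$, mirroring the original exponential argument. Fixing $f\in H^\infty(\bb D)$ and $g\in H^p(\bb D)$, I would study $\phi(\lambda):=\tilde F(e^{\lambda f}g)$; each $e^{\lambda f}$ is outer, the values already computed force $\chi(e^{\lambda f})=e^{\lambda f(w)}$, and one aims to prove $\phi(\lambda)=e^{\lambda f(w)}\tilde F(g)$ by showing that $\phi$ is entire of the correct exponential type and then invoking Liouville, after which differentiation at $\lambda=0$ delivers $\tilde F(fg)=f(w)\tilde F(g)=\chi(f)\tilde F(g)$. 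The delicate point, exactly as in the proof of Theorem \ref{KLS}, is that $F$ is not assumed continuous on $H^p(\bb D)$: one must extract the regularity and growth of $\phi$ from the non-outer condition alone (the module analogue of the bound $|F(x)|\le\norm{x}$ coming from $F(x)\in\sigma(x)$), and since point evaluations are unbounded on $H^p(\bb D)$ for $p<\infty$, this is where the module structure must be exploited most carefully.
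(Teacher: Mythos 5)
The statement you are proving is one the paper itself does not reprove: it is quoted verbatim from Sebastian and Daniel, and the route the paper leans on (see the proof of Theorem \ref{SD-improved}) is to invoke their Kowalski--S\l odkowski theorem for modules (\cite[Theorem 2.3]{sebastian2021weaker}) applied to the triple $\cl A=H^\infty(\bb D)$, $\cl M=H^p(\bb D)$, $S=$ outer functions, which directly yields the character identity $F(fg)=\chi(f)F(g)$, after which $\chi$ is identified as a point evaluation by arguments of the kind you give. Your treatment of the $H^\infty(\bb D)$ part is correct and complete: restricting the (normalised) map to $H^\infty(\bb D)$, observing that invertible elements of $H^\infty(\bb D)$ are outer so that ``not outer'' forces ``not invertible'', applying Theorem \ref{KLS}, locating $w=\chi(z)$ in $\bb D$ via the non-outerness of $z-w$, and using the bounded difference quotient $(g-g(w))/(z-w)$ to get $\chi(g)=g(w)$ --- this cleanly establishes the ``Further'' assertion with $c=F(1)$, and the uniqueness of $\chi$ (given its existence) is indeed immediate from taking $g=1$.

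The genuine gap is that the main assertion --- the \emph{existence} of a character $\chi$ satisfying $F(fg)=\chi(f)F(g)$ for all $f\in H^\infty(\bb D)$ and $g\in H^p(\bb D)$ --- is never proved; you outline a plan and then explicitly leave its crux unresolved. The difficulty is real and is not a technicality: since $F$ is neither linear nor continuous, one cannot define $\chi_g(a):=F(ag)/F(g)$ and verify the KS hypothesis for it (substituting $m_1=a_1g$, $m_2=a_2g$ into the hypothesis produces the non-outer element $\big(F(a_1g)-F(a_2g)\big)1-F(1)(a_1-a_2)g$, and the unwanted factor $g$ in the last term cannot be removed without already knowing something like the module identity), and your proposed fix --- showing $\lambda\mapsto\tilde F(e^{\lambda f}g)$ is entire of exponential type and applying a Liouville/Hadamard argument --- requires exactly the regularity and growth control that, in the Banach-algebra case, comes from the automatic Lipschitz bound $|F(x)-F(y)|\le\norm{x-y}$, a bound that fails here because point evaluations and the ``non-outer spectrum'' are unbounded with respect to the $H^p$ norm when $p<\infty$. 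Supplying this step amounts to proving the module KS theorem of \cite{sebastian2021weaker}, which is precisely the content your proposal is missing; as it stands, you have proved the easier half of the theorem (which, note, would also follow from the module identity by setting $g=1$) and reduced the harder half to an open plan.
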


Apart from being captivating in their own right, these various analogues of the GKZ theorem and its generalization, the KS theorem, are attracting attention as they lead to, in their respective settings, characterizations of linear maps that preserve outer functions. Mashreghi and Ransford used 
Theorem \ref{MR1} to show that linear operators on Hardy spaces $H^p(\mathbb{D}), \ 0<p\le \infty$ preserving outer functions are necessarily weighted composition operators (\cite[Theorems 2.2 \& 2.3]{mashreghi2015gleason}). Sebastian and Daniel in \cite{sebastian2021weaker}, Kou and Liu in \cite{kou2017linear}, and Sampat in \cite{sampat2021cyclicity} have also used their results 
mentioned above to obtain similar results for their respective settings.   

In this paper, we extend the KS theorem to a much larger class of topological spaces (Theorem \ref{2.3})—as opposed to Banach spaces—consisting of functions defined on a bounded subset of $\bb C^n$ and taking values in a fixed complex Banach algebra with identity. To be precise, given a topological space $\cl X$ of functions defined on certain bounded subset of $\mathbb C^n$ taking values in a complex Banach algebra $\mathcal{A}$ with identity, we characterize non-zero continuous functions on $\cl X$ that satisfy some conditions similar to those imposed in the KS theorem as a composition of a point evaluation on $\cl X$ and a multiplicative linear functional on $\mathcal{A}$. We note that Theorem \ref{2.3} extends our earlier work Theorem \ref{JLS} to functions of several variables; nevertheless, the techniques used in this paper differ from those employed for the proof of Theorem \ref{JLS} in \cite{jaikishan2024multiplicativity}. In addition, it is worth mentioning that many interesting and well-known function spaces such as the $\bb C^m$-valued Hardy spaces on $\mathbb D^n$, the Drury-Arveson space, the Dirichlet-type spaces on $\mathbb D^n$, and the polydisc algebra are covered by our result. A more elaborated list is given in Section \ref{GKZ-top}.  

Furthermore, we use Theorem \ref{2.3} to characterize weighted composition operators between Hardy spaces (Theorem \ref{w-comp}) amongst the family of continuous functions between them. Lastly, in Section \ref{converse}, we give a partial but noteworthy result towards a multiplicative GKZ theorem for Hardy spaces. 

 As we mentioned earlier, \cite{sebastian2021weaker} has extended the KS theorem to Hardy spaces 
(Theorem \ref{SD-h}), and \cite{sampat2021cyclicity} has given a several variables version 
of the GKZ theorem (Theorem \ref{Sam}). So, listing some aspects where our results differ from those in \cite{sampat2021cyclicity} and \cite{sebastian2021weaker} is natural and necessary.

\begin{enumerate}[(i)]
\item Theorem \ref{Sam} (\cite[Theorem 1.4]{sampat2021cyclicity}) is a several variables version of the GKZ 
theorem, whereas our result (Theorem \ref{2.3}) is a several variables version of the KS theorem. In addition, our 
functions are vector-valued as well.

\item The author in \cite{sampat2021cyclicity} uses some technical complex function theory results,  whereas our arguments are elementary.

\item We assume the given map to be non-zero on certain polynomials, whereas Theorem \ref{SD-h}  assumes the map to be non-zero on the set of all outer functions, and Theorem \ref{Sam} assumes it to be non-zero on the set $\{e^{w\cdot z}:w\in \mathbb{C}\}.$ 

\item Theorem \ref{SD-h} extends the KS theorem only to Hardy spaces $H^p(\mathbb{D}), \ 1\le p\le \infty,$ whereas Theorem \ref{2.3} covers a much broader class including the vector-valued Hardy spaces $H^p(\bb D^n, \bb C^m), \ 0<p\le \infty$, the Drury-Arveson space, the Dirichlet-type spaces over the polydisc $\bb D^n$, the polydisc algebra.

\item The arguments deployed by us to prove Theorem \ref{2.3} are much simpler compared to ones used by the authors of \cite{sebastian2021weaker} to prove Theorem \ref{SD-h}. The arguments used in  \cite{sebastian2021weaker} are specific to properties of Hardy spaces and can't be used for our setting. 

\item In comparison to Theorem \ref{SD-h}, Theorem \ref{2.3} has an extra hypothesis of continuity, which we show (Example \ref{cont-nec}) to be essential to deduce the desired conclusion for our setting. We also note that Theorem \ref{Sam} does include continuity in its hypotheses, but its necessity is not addressed in \cite{sampat2021cyclicity}. 

\item Unlike Theorem \ref{Sam} and \ref{SD-h}, Theorem \ref{2.3} is not restricted to spaces of analytic functions only.  
\end{enumerate}

In addition, in Theorem \ref{SD-improved}, we give an improvement of Theorem \ref{SD-h}. Indeed, the conclusion of Theorem \ref{SD-h}, when compared to Theorem \ref{MR1}, is not up to satisfaction as it does not assert the behaviour of the map on unbounded functions in $H^p(\bb D).$ Interestingly, we show in Theorem 
\ref{SD-improved} that the hypotheses of Theorem \ref{SD-h} are enough to establish that the assertion of Theorem \ref{SD-h} holds for the entire $H^p(\mathbb D)$, which is in parallel to Theorem \ref{MR1}. 

 \section{Kowalski-S\l odkowski's Theorem for Hardy spaces}
In this section, we shall refine Theorem \ref{SD-h}. We shall show that the hypotheses of Theorem \ref{SD-h} are indeed enough to establish that the map, as desired, is a point evaluation (up to a constant). We want 
to note that we fail to understand why the authors have restricted the result to only $p\ge 1$ when their 
arguments are valid for $0<p\le \infty.$ The following is our rectification of Theorem \ref{SD-h}. 

\begin{thm}\label{SD-improved}
Let $0< p\le \infty$ and let $F: H^p(\bb D) \rightarrow \mathbb{C}$ be a non-zero function such that $F(0)=0$ and 
\begin{equation}
\big(F(f_1)-F(f_2)\big)1-F(1)(f_1-f_2)\notin S
\end{equation}
for all $f_1, \ f_2\in H^p(\bb D),$ where $S$ is the set of all outer functions in $H^p(\bb D)$. Then there exist a non-zero constant $c$ and $w\in \bb D$ such that 
$$
F(f)=cf(w)
$$		
for every $f\in H^p(\bb D)$.
\end{thm}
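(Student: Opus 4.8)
The plan is to first show that $F(1)\neq0$, then to feed this into the multiplicative structure already known on bounded functions, and finally to bootstrap from $H^\infty(\bb D)$ to all of $H^p(\bb D)$ by multiplying with a carefully chosen outer function. To see that $F(1)\neq0$, suppose instead $F(1)=0$; the hypothesis then collapses to $\big(F(f_1)-F(f_2)\big)1\notin S$ for all $f_1,f_2\in H^p(\bb D)$. Since every non-zero constant is outer, this forces $F(f_1)=F(f_2)$ for all $f_1,f_2$, so $F$ is constant, and $F(0)=0$ gives $F\equiv0$, contradicting that $F$ is non-zero. Hence $F(1)\neq0$, so the full hypotheses of Theorem \ref{SD-h} are satisfied; its proof moreover goes through verbatim for every $0<p\le\infty$. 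It therefore supplies a character $\chi$ on $H^\infty(\bb D)$, a point $w\in\bb D$, and the constant $c:=F(1)\neq0$ with
\begin{equation*}
F(\phi g)=\chi(\phi)F(g)\quad(\phi\in H^\infty(\bb D),\ g\in H^p(\bb D))\qquad\text{and}\qquad F(\phi)=c\,\phi(w)\quad(\phi\in H^\infty(\bb D)).
\end{equation*}
Taking $g=1$ in the first relation and comparing with the second yields $\chi(\phi)=\phi(w)$, so in fact $F(\phi g)=\phi(w)F(g)$ for all $\phi\in H^\infty(\bb D)$ and $g\in H^p(\bb D)$. It remains only to upgrade the evaluation formula from $H^\infty(\bb D)$ to all of $H^p(\bb D)$.

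For this bootstrap, fix $f\in H^p(\bb D)$ and let $u$ be the outer function in $H^\infty(\bb D)$ determined by the boundary modulus $|u|=\big(1+|f|\big)^{-1}$ on $\bb T$. This is legitimate for every $0<p\le\infty$: since $0<(1+|f|)^{-1}\le1$ we have $\log(1+|f|)\in L^1(\bb T)$ (bounding $\log(1+x)\le\log2+\tfrac1p x^p$ for $x>1$ and using $f\in H^p(\bb D)$), so $u$ is well defined, satisfies $\norm{u}_\infty\le1$, and, being outer, is zero-free on $\bb D$; in particular $u(w)\neq0$. Moreover $uf\in H^p(\bb D)$ has boundary modulus $|f|(1+|f|)^{-1}\le1$, whence $uf\in H^\infty(\bb D)$. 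Applying the two displayed relations with $\phi=u$ and $g=f$ then gives
\begin{equation*}
u(w)F(f)=F(uf)=c\,(uf)(w)=c\,u(w)f(w).
\end{equation*}
Dividing by $u(w)\neq0$ yields $F(f)=c\,f(w)$, which is the desired conclusion since $f\in H^p(\bb D)$ was arbitrary.

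The conceptual crux is this last step: Theorem \ref{SD-h} only pins down $F$ on bounded functions, and the new idea is that an arbitrary $f\in H^p(\bb D)$ can be tamed by an outer $H^\infty(\bb D)$ multiplier $u$ that keeps $uf$ bounded while leaving $u(w)$ non-zero, so that the multiplicativity relation transports the $H^\infty$ formula to $f$ itself. The two points requiring care—both routine—are the verification that $\log(1+|f|)\in L^1(\bb T)$ (hence that $u$ is genuinely outer) in the range $0<p<1$, where $H^p(\bb D)$ is no longer locally convex, and the observation that the relation $F(\phi g)=\phi(w)F(g)$ produced by the Sebastian--Daniel argument is insensitive to local convexity and so remains available for all $0<p\le\infty$. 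No appeal to the GKZ theorem or to any finer function theory is needed.
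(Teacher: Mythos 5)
Your proof is correct, and its skeleton coincides with the paper's: both reduce to Sebastian--Daniel's module-theoretic KS theorem (noting its proof is insensitive to the restriction $p\ge 1$), obtain the character $\chi$ with $F(\phi g)=\chi(\phi)F(g)$ and the formula $F=c\,\mathrm{ev}_w$ on $H^\infty(\bb D)$, and then bootstrap to all of $H^p(\bb D)$. The difference lies in the bootstrap. The paper argues in two stages: first it handles outer $O\in H^p(\bb D)$ by writing $O=O_1/O_2$ with $O_1,O_2$ outer in $H^\infty(\bb D)$ and applying $F$ to $O_1=O_2O$, and then it handles general $f$ via the inner--outer factorization $f=I\cdot O$, using $F(f)=\chi(I)F(O)$. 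You instead tame an \emph{arbitrary} $f\in H^p(\bb D)$ in a single step with the outer multiplier $u$ of boundary modulus $(1+|f|)^{-1}$, so that $uf\in H^\infty(\bb D)$ and $u(w)\ne 0$, and divide; this is essentially the same construction that underlies the paper's quotient representation of outer functions, but inlining it lets you bypass the inner--outer factorization entirely, which is a modest simplification. You also gain one genuine point of rigor over the paper: the theorem's hypothesis is only that $F$ is non-zero (unlike Theorem \ref{SD-h}, which assumes $F(1)\ne 0$), and your opening argument --- that $F(1)=0$ would force $(F(f_1)-F(f_2))\cdot 1\notin S$, hence $F$ constant, hence $F\equiv 0$ by $F(0)=0$ --- supplies the deduction $F(1)\ne 0$ that the paper's proof uses implicitly but never records.
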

\begin{proof} The proof of Theorem \ref{SD-h} as given in \cite{sebastian2021weaker} uses \cite[Theorem 2.3]{sebastian2021weaker}. We note that even for $0<p<1,$ the triple, $\cl A=H^\infty(\bb D), \ \cl M=H^p(\bb D), \ S$ equals the set of all outer functions in $H^p(\bb D)$ satisfy the hypotheses of Theorem 2.3 from \cite{sebastian2021weaker}, and so just as shown in \cite{sebastian2021weaker}, there exists a character $\chi:H^\infty(\bb D)\to\bb C$ such that
$F(fg)=\chi (f)F(g)$ for all $f\in H^\infty(\bb D)$ and $g\in H^p(\bb D)$, where $0<p\le \infty$. Further, by using arguments as were used in Theorem \ref{SD-h}, there exists a point $w\in \bb D$ such that
$$
\chi(f)=f(w)
$$
for all $f\in H^\infty(\bb D).$
This implies that for every $f\in H^\infty(\bb D)$, we obtain $F(f)=\chi(f)F(1)=F(1)f(w).$  

Let $O$ be an outer function in $H^p(\bb D).$ Then there exist outer functions $O_1$ and $O_2$ in $H^\infty(\bb D)$ such that $O=O_1/O_2.$ This means $O_1=O_2O$, and so $F(O_1)=\chi(O_2)F(O).$ Therefore, $F(O)=F(1)O_1(w)/O_2(w)=F(1)O(w).$ This, along with the inner-outer factorization of functions in $H^p(\bb D)$, implies that $F(f)=F(1)f(w)$ for every $f\in H^p(\bb D).$  
\end{proof}

Now, using Theorem \ref{SD-improved} and employing arguments similar to the ones used in  \cite[Theorem 2.2]{mashreghi2015gleason}, we can improve Theorem 3.2 from \cite{sebastian2021weaker} by extending it from $H^\infty(\bb D)$ to $H^p(\bb D)$ as follows.

\begin{thm}\label{SD-pre1} For $0<p\le \infty$, let $T:H^p(\bb D) \to Hol(\bb D)$ be a continuous map satisfying $T(0)=0, \ T(1)(z)\ne 0$ for every $z\in \bb D,$ and 
$$
(Tf(z)-Tg(z))1-T1(z)(f-g)\notin S
$$ 
for every $f, g\in H^p(\bb D)$ and $z\in \bb D,$ where $S$ is the set of outer functions in $H^p(\bb D).$
Then there exist holomorphic functions $\phi:\bb D\to \bb D$ and $\psi:\bb D\to \bb C\setminus \{0\}$ such that 
$$
Tf=\psi \cdot (f\circ \phi).
$$
for every $f\in H^p(\bb D).$
\end{thm}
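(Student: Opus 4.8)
The plan is to apply Theorem~\ref{SD-improved} pointwise in $z$. For each fixed $z\in\bb D$, define $F_z:H^p(\bb D)\to\bb C$ by $F_z(f)=Tf(z)$. Then $F_z(0)=T0(z)=0$, and $F_z$ is non-zero because $F_z(1)=T1(z)\ne 0$ by hypothesis. Moreover, for this fixed $z$ the hypothesis on $T$ reads
\[
\big(F_z(f)-F_z(g)\big)1-F_z(1)(f-g)\notin S
\]
for all $f,g\in H^p(\bb D)$, which is exactly the membership condition of Theorem~\ref{SD-improved}. Hence Theorem~\ref{SD-improved} applies to each $F_z$, yielding a non-zero constant $c_z$ and a point $w_z\in\bb D$ with $Tf(z)=c_z f(w_z)$ for all $f\in H^p(\bb D)$.

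Next I would read off the representing data from the values of $T$ on two fixed functions. Evaluating $Tf(z)=c_zf(w_z)$ at $f=1$ gives $c_z=T1(z)$, and evaluating at the identity function $u\in H^\infty(\bb D)\subseteq H^p(\bb D)$, $u(\lambda)=\lambda$, gives $c_zw_z=Tu(z)$, so that $w_z=Tu(z)/T1(z)$. In particular the pair $(c_z,w_z)$ is uniquely determined by $F_z$, so the assignments $z\mapsto c_z$ and $z\mapsto w_z$ are genuine functions on $\bb D$. I then set $\psi:=T1$ and $\phi:=Tu/T1$. Since $T$ takes values in $Hol(\bb D)$, both $T1$ and $Tu$ are holomorphic; as $T1$ is non-vanishing, $\psi$ is a holomorphic map $\bb D\to\bb C\setminus\{0\}$, and $\phi=Tu/T1$ is holomorphic on $\bb D$ with $\phi(z)=w_z\in\bb D$ for every $z$, hence $\phi:\bb D\to\bb D$. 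Finally, for any $f\in H^p(\bb D)$ and $z\in\bb D$,
\[
Tf(z)=c_z f(w_z)=\psi(z)\,f(\phi(z))=\big(\psi\cdot(f\circ\phi)\big)(z),
\]
which gives $Tf=\psi\cdot(f\circ\phi)$, as required.

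Once Theorem~\ref{SD-improved} is available, the argument is largely bookkeeping, and I expect the only point needing care to be the holomorphic dependence of the representing data on $z$. The structure of the problem resolves this cleanly: instead of arguing directly that $z\mapsto w_z$ is holomorphic, I extract $c_z$ and $w_z$ from $T1$ and $Tu$, whose holomorphicity is guaranteed by the codomain $Hol(\bb D)$, so that $\psi$ and $\phi$ are manifestly holomorphic. The remaining subtlety, that the single pair $(c_z,w_z)$ serves \emph{all} $f$ simultaneously at a given $z$, is already settled by the uniqueness of $(c_z,w_z)$ noted above. I would therefore anticipate no serious obstacle beyond correctly transcribing the pointwise hypothesis into the form required by Theorem~\ref{SD-improved}.
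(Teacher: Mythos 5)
Your proposal is correct and takes essentially the same approach as the paper: fix $z\in\bb D$, apply Theorem~\ref{SD-improved} to the functional $f\mapsto Tf(z)$, and then recover $\psi=T1$ and $\phi=(Tz)/T1$ from the values of $T$ on the constant and coordinate functions, so that holomorphy of $\psi$ and $\phi$ is inherited from the codomain $Hol(\bb D)$. Your explicit remark that $(c_z,w_z)$ is uniquely determined, and hence the representation is consistent across all $f$, is a point the paper leaves implicit, but the argument is the same.
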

\begin{proof} For each fixed $w\in \bb D$, the map $T_w:H^p(\bb D)\to \bb C$ given by $T_w(f)=(Tf)(w)$ satisfies the hypotheses of Theorem \ref{SD-improved}; therefore, there exist a point $a_w\in \bb D$ and a constant $c_w$ such that $(Tf)(w)=c_w f(a_w)$ for every $f\in H^p(\bb D).$ Then by taking $f$ to be the constant function $1$, we see that $c_w=T1(w)\ne 0.$ Further, by taking $f(z)=z$, we see 
$a_w=(Tz)(w)/T1(w).$ We now define $\psi=T1$ and $\phi=(Tz)/\psi$. Then $\psi$ and $\phi$ are holomorphic function on $\bb D$ such that $\psi$ doesn't vanish at any point in $\bb D$, 
$\phi(\bb D)\subseteq\bb D$, and $Tf=\psi \cdot (f\circ \phi)$. This completes the proof. 
\end{proof}

\section{Kowalski-S\l odkowski's theorem for topological spaces}\label{GKZ-top}
In this section, we shall prove an analogue of the KS theorem (Theorem \ref{KLS}) for topological spaces of functions of several variables. 

Let $\mathbb{Z}^{+}(n)$ denote the set of $n$-tuples $\alpha=(\alpha_1, \dots, \alpha_n)$ of 
non-negative integers. For $\alpha\in \bb Z^{+}(n)$ and $z\in \bb C^n,$ define $z^{\alpha}:=z_1^{\alpha_1}z_2^{\alpha_2}\dotsm z_n^{\alpha_n}$. For a Banach algebra $\cl A$ with identity, 
let $\cl P^\mathcal{A}_n$ 
denote the set of 
all $\cl A$-valued polynomials in $n$ variables, that is,  
\begin{equation*}
\mathcal{P}^{\mathcal{A}}_n=\left\{p(z)=\sum\limits_{|\alpha|\le k}a_{\alpha}z^{\alpha}:  a_{\alpha}\in \mathcal{A} , \ z\in \mathbb{C}^n \text{ and $k$ is a non-negative integer}\right \},
\end{equation*}
where $|\alpha|=\alpha_1+\alpha_2+\dotsm + \alpha_n$. 
For convenience, we shall write $\cl P_n$ instead of $\cl P_n^{\cl A}$ if $\cl A=\bb C.$ Clearly, given any compact subset $K$ of $\bb C^n$, $\mathcal{P}^\mathcal{A}_n$ can be viewed as a subalgebra of the Banach algebra $C(K,\mathcal{A})$, the set of all continuous $\cl A$-valued functions with the 
sup norm. We say $z_0$ is zero of a polynomial $p\in \cl P_n^{\cl A}$ if $p(z_0)=0$. For the rest of this section, let 
$\cl A$ represents a fixed complex Banach algebra with identity $e$. 

The following result is integral to our version of the KS theorem for topological spaces (Theorem \ref{2.3}). We present it as an independent result as, apart from being interesting in its own right, it brings out the crux of the method used to prove Theorem \ref{2.3} even more prominently. It characterizes multiplicative linear functionals on $\mathcal{P}_n^\mathcal{A}$ as non-zero complex-valued functions on $\mathcal{P}_n^\mathcal{A}$ that satisfy a condition very similar to the one assumed in the KS theorem. 
First, we recall the notion of a polynomial convex hull of a set in $\bb C^n.$ The polynomial convex hull $\widehat{D}$ of a bounded subset $D$ of $\bb C^n$ is consisting of all $w\in \bb C^n$ such that 
$$
|p(w)|\le \sup_{z\in D}|p(z)|
$$
for all polynomials $p\in \cl P_n$. Note that $\widehat{D}$ is a closed set and $D\subseteq \widehat{D}.$ 

\begin{thm}\label{Result1} Let $\Lambda : \mathcal{P}_n^\mathcal{A} \to \mathbb{C}$ be a non-zero function with $\Lambda(0)=0$. Suppose $D$ is a bounded subset of $\bb C^n$ such that for every $p$ and $q$ in $\cl P^{\cl A}_n$ the polynomial   
\begin{equation*}
\big(\Lambda(p)-\Lambda(q)\big)e-(p-q) 
\end{equation*}
has a zero in $D$. Then there exist $w\in \widehat{D}$ and a multiplicative linear functional $\phi$ on $\cl A$ such that
\begin{equation*}
\Lambda(p)= \phi(p(w)) \quad \text{for all } p\in \mathcal{P}_n^\mathcal{A}.
\end{equation*}
Moreover, we assert $w\in D$ whenever 
\begin{enumerate}[(i)]
\item $n=1$;
\item $D=D_1\times \cdots \times D_n$ for some bounded sets $D_1, \dots, D_n$ in $\bb C;$
\item $D$ is an open or a closed Euclidean ball in $\bb C^n$. 
\end{enumerate}
\end{thm}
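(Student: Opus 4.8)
The plan is to extract a point $w$ and a functional $\phi$ from the hypothesis and then verify both multiplicativity and linearity of the resulting composition. First I would exploit the hypothesis with $q=0$: the polynomial $\Lambda(p)e - p$ must have a zero in $D$ for every $p \in \mathcal{P}_n^{\mathcal{A}}$. Applying this to scalar polynomials $p = \lambda \cdot e$ (constants) and to the coordinate monomials $p(z) = z_i e$ should pin down candidate values. Concretely, for $p(z)=z_i e$ the condition says $\Lambda(z_i e)e - z_i e = (\Lambda(z_ie)-z_i)e$ vanishes at some point of $D$, forcing a coordinate $z_i = \Lambda(z_ie)$; assembling these across $i=1,\dots,n$ produces a candidate point $w = (\Lambda(z_1 e),\dots,\Lambda(z_n e)) \in \mathbb{C}^n$. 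The real content is to show that this single $w$ works simultaneously for \emph{all} polynomials, i.e. that the zero forced by each $p$ can be taken to be this fixed $w$.

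The key device I would use to force a common zero is a dimension/counting argument on scalar polynomials combined with the structure of $D$. For a scalar polynomial $p \in \mathcal{P}_n$ (values in $\mathbb{C}e$), the hypothesis with $q=0$ says $p - \Lambda(p)$ has a zero in $D$; more generally, for two scalar polynomials $p,q$, the difference $(\Lambda(p)-\Lambda(q)) - (p-q)$ has a zero in $D$. I would define $\phi$ on $\mathcal{A}$ by first establishing the scalar claim $\Lambda(\lambda e + a) = \lambda + \phi(a)$-type decompositions, then set $\phi(a) := \Lambda(a e)$ (the constant-polynomial restriction) and show $\phi$ is a character on $\mathcal{A}$. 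To locate the common $w$, the natural approach is to use that the map $p \mapsto \Lambda(p)$ restricted to scalar polynomials behaves like evaluation: for every scalar $p$, $\Lambda(p) = p(w)$ at the $w$ identified above, and the genericity of polynomials (the shared zero must be consistent across all of $\mathcal{P}_n$) pins $w$ down uniquely in $\widehat{D}$ via the polynomial-convex-hull inequality $|p(w)| \le \sup_{z\in D}|p(z)|$, which follows because $\Lambda(p) = p(w)$ and the forced zero lies in $D$.

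For the multiplicativity of $\phi$ and the full formula $\Lambda(p) = \phi(p(w))$, I would proceed by first treating $\mathcal{A}$-valued constants $a e \mapsto \phi(a)$, using the KS-type condition on pairs of constant polynomials to invoke (or mirror the proof of) the classical Kowalski--S\l odkowski theorem, Theorem \ref{KLS}, on $\mathcal{A}$ itself: the restriction of $\Lambda$ to constant polynomials is a map $\mathcal{A}\to\mathbb{C}$ satisfying $F(0)=0$ and the non-invertibility condition $\bigl(F(a)-F(b)\bigr)e - (a-b)\notin G(\mathcal{A})$, so it is a character $\phi$. Then for a general polynomial $p$ I would show $\Lambda(p) = \phi(p(w))$ by applying the hypothesis to the pair $p$ and the constant polynomial $p(w)$: the polynomial $\bigl(\Lambda(p) - \phi(p(w))\bigr)e - (p - p(w))$ has a zero in $D$, and since $p - p(w)$ vanishes at $w$ while the forced common zero is $w$, the scalar coefficient must vanish.

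The hard part will be establishing that a \emph{single} point $w$ serves every polynomial rather than a zero varying with $p$, since the hypothesis only guarantees existence of some zero in $D$ per polynomial; bridging from per-polynomial zeros to one universal $w$ is the crux and is exactly where the polynomial convex hull enters for the general membership $w\in\widehat D$. The ``moreover'' refinements then require sharper control: in case (i) $n=1$ a zero in $D\subset\mathbb{C}$ already gives $w\in D$ since $\widehat D$ for a bounded planar set fills holes but a one-variable argument can locate the zero directly; case (ii) follows because $\widehat{D_1\times\cdots\times D_n} = \widehat{D_1}\times\cdots\times\widehat{D_n}$ reduces to handling one coordinate at a time via polynomials in a single $z_i$; and case (iii) uses that a Euclidean ball is polynomially convex, so $\widehat D = D$ and the general conclusion $w\in\widehat D$ already yields $w\in D$. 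I expect case (ii), disentangling the coordinates so that each $w_i$ is forced into $D_i$, to demand the most care.
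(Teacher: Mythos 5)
There is a genuine gap, and it sits exactly at the point you yourself flag as the crux. Your plan extracts $w=(\Lambda(z_1e),\dots,\Lambda(z_ne))$ and a character $\phi$ on the constants (that part is sound: for constant polynomials the hypothesis forces $(\Lambda(a)-\Lambda(b))e-(a-b)=0$, which is certainly not invertible, so the KS theorem applies to the restriction). But the step that is supposed to give $\Lambda(p)=\phi(p(w))$ for a general polynomial is circular: applying the hypothesis to the pair $\bigl(p,\,p(w)\bigr)$ only produces a zero at \emph{some} point $z_0\in D$ depending on $p$, and nothing in your argument forces $z_0=w$; asserting that ``the forced common zero is $w$'' assumes the conclusion. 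Likewise, the hull inequality $|p(w)|\le\sup_{z\in D}|p(z)|$ that you invoke to ``pin down'' $w$ presupposes $\Lambda(pe)=p(w)$ for scalar $p$, which is part of what must be proved. What is missing is any mechanism to pass from per-polynomial zeros to linearity and multiplicativity of $\Lambda$ on \emph{all} of $\mathcal{P}_n^{\mathcal{A}}$. The paper's proof supplies exactly this mechanism: the hypothesis yields the uniform estimate $\|(\Lambda(p)-\Lambda(q))e\|\le\sup_{z\in\overline{D}}\|p(z)-q(z)\|$, so $\Lambda$ extends continuously to the sup-norm closure $\overline{\mathcal{P}_n^{\mathcal{A}}}$ inside the Banach algebra $C(\overline{D},\mathcal{A})$; the KS non-invertibility condition is then verified in this \emph{complete} algebra (if $(\Lambda_1(f)-\Lambda_1(g))e-(f-g)$ were invertible, openness of the invertible group would make nearby polynomial elements invertible, hence nonvanishing on $D$, contradicting the hypothesis); and only then does the full KS theorem (Theorem \ref{KLS}) give linearity and multiplicativity, after which $\Lambda(p)=\phi(p(w))$ is a short computation with monomials. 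You apply KS only to the copy of $\mathcal{A}$ given by the constants; you cannot apply it to the polynomial algebra itself because that algebra is not complete, and this completion step is the key idea your proposal lacks.

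Two of your ``moreover'' arguments also fail as stated. For case (iii), an \emph{open} Euclidean ball is not equal to its hull under the paper's definition: $\widehat{D}$ is the closed ball, so ``$\widehat D=D$'' only places $w$ in $\overline{D}$. The paper instead tests $\Lambda$ against the polynomial $p(z)=\sum_{i=1}^n(\overline{w_i-b_i})e(z_i-b_ie)$ and uses Cauchy--Schwarz to get the strict inequality $\|w-b\|\le\|a-b\|<r$ with $a\in D$ the guaranteed zero. For case (ii), the identity $\widehat{D_1\times\cdots\times D_n}=\widehat{D_1}\times\cdots\times\widehat{D_n}$ does not help, since $\widehat{D_i}\supsetneq D_i$ in general (take $D_i$ a circle); the correct route, which you half-gesture at, is direct: the zero set of $(\Lambda(z_ie)-z_i)e$ is $\{z:z_i=\Lambda(z_ie)\}$, and this meets $D_1\times\cdots\times D_n$ precisely when $w_i=\Lambda(z_ie)\in D_i$, which is also how the paper argues (and how case (i) goes with $p(z)=ez$).
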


\begin{proof}
For every $p, \ q\in \cl P_n^\mathcal{A}$ there exists  $w_{p,q}\in D$ such that $(\Lambda(p)-\Lambda(q))e=p(w_{p,q})-q(w_{p,q})$. 
In particular, for each polynomial $p$, there is a point $w_p$ in $D$ such that $\Lambda(p)e=p(w_p)$. This gives us 
	\begin{equation}\label{cont}
		\norm{(\Lambda(p)-\Lambda(q))e}=\norm{p(w_{p,q})-q(w_{p,q})}\le\sup_{z\in \overline{D}}\norm{p(z)-q(z)}.
	\end{equation}
Let $\overline{\mathcal{P}_n^\mathcal{A}}$ denote the closure of $\mathcal{P}_n^{\cl A}$ 
with respect to the sup norm on $C(\overline{D}, \cl A)$, and let $f$ be an element of $\overline{\mathcal{P}_n^\mathcal{A}}$. 
Then there is a sequence $\{p_n\}_n$ of polynomials that 
converge to $f$, which, using Inequality (\ref{cont}), yields the existence of 
$\lim \limits_{n\to\infty} \Lambda(p_n)$. This allows us to  define a function $\Lambda_1: \overline{\mathcal{P}^\mathcal{A}_n} \to \mathbb{C}$ as  $\Lambda_1(f):= \lim\limits_{n\to\infty}\Lambda(p_n)$, 
where $p_n \in \mathcal{P}_n^{\cl A}$ such that $p_n \longrightarrow f$. 
It is easy to verify that $\Lambda_1$ is a well-defined function. 
Indeed, if there is another sequence $\{q_n\}_n$ converging to $f$, then from Inequality (\ref{cont}), we get 
$\norm{\big(\Lambda(p_n)-\Lambda(q_n)\big)e}\le \norm{p_n-q_n}_\infty$, from where it follows that $\lim\limits_{n\to\infty} \Lambda(p_n)=\lim \limits_{n\to\infty}\Lambda(q_n)$.
	
Now we want to show that $\big(\Lambda_1(f)-\Lambda_1(g)\big)e-(f-g) \notin G(\overline{\mathcal{P}_n^\mathcal{A}})$ (the set of invertible elements of $\overline{\mathcal{P}_n^\mathcal{A}}$) for all $f,g\in \overline{\mathcal{P}_n^\mathcal{A}}$. 
Suppose there are $f$ and $g$ in $\overline{\mathcal{P}_n^\mathcal{A}}$ such that $\big(\Lambda_1(f)-\Lambda_1(g)\big)e-(f-g)\in G(\overline{\mathcal{P}_n^\mathcal{A}})$. Let $\{p_n\}_n$ and 
$\{q_n\}_n$ be sequences of polynomials in $\cl P_n^{\cl A}$ such that $p_n\longrightarrow f$ and $q_n\longrightarrow g$ in $\overline{\cl P_n^\cl A}$. Then  
\begin{equation*}
\big(\Lambda_1(p_n)-\Lambda_1(q_n)\big)e-(p_n-q_n) \longrightarrow \big(\Lambda_1(f)-\Lambda_1(g)\big)e-(f-g)
\end{equation*}
in $\overline{\cl P_n^{\cl A}}$ as $n\longrightarrow \infty$. Since $G(\overline{\mathcal{P}_n^\mathcal{A}})$ is open,  there 
exists a positive integer $N$ such that 
$\big(\Lambda_1(p_n)-\Lambda_1(q_n)\big)e-(p_n-q_n)\in G(\overline{\mathcal{P}_n^\mathcal{A}})$ for all $n\ge N$. In particular, $\big(\Lambda_1(p_N)-\Lambda_1(q_N)\big)\mathbf{e}-(p_N-q_N)$ can't vanish on $D$. But this is a contradiction to the hypothesis. So, $(\Lambda_1(f)-\Lambda_1(g))e-(f-g)\notin G(\overline{\mathcal{P}_n^\mathcal{A}})$ for all $f,g$ in $\overline{\mathcal{P}_n^\mathcal{A}}$.  

Then, by Theorem \ref{KLS}, $\Lambda_1$ is multiplicative and linear. In particular, $\Lambda$ is multiplicative and linear on $\mathcal{P}_n^\mathcal{A}$.
	
For each $1\le i\le n,$ define $r_i(z):=z_i e$. Let $w=(\Lambda(r_1),\Lambda(r_2),\dots ,\Lambda(r_n))$. Since $\Lambda$ is multiplicative, we obtain  $\Lambda(z^{\alpha})=\Lambda(e z_1^{\alpha_1}) \Lambda(e z_2^{\alpha_2})\cdots \Lambda(e z_n^{\alpha_n})=w^{\alpha}$. Now consider the map $\phi: \mathcal{A} \to \bb C$ given by $\phi(a)=\Lambda(a)$. Then for any polynomial $p(z)=\sum\limits_{|\alpha|\le k}a_{\alpha}z^{\alpha}$ in $\mathcal{P}_n^{\cl A}$, we have
\begin{eqnarray*}
\Lambda(p)&=&\sum\limits_{|\alpha|\le k}\Lambda(a_{\alpha}z^{\alpha})=\sum\limits_{|\alpha|\le k}\Lambda(a_{\alpha})\Lambda(e z^{\alpha})\\
&=&\sum\limits_{|\alpha|\le k}\phi(a_{\alpha})\Lambda(e z^{\alpha})=\sum\limits_{|\alpha|\le k}\phi(a_{\alpha})w^{\alpha}\\
&=&\phi\left(\sum\limits_{|\alpha|\le k}a_{\alpha}w^{\alpha}\right)=\phi(p(w))
\end{eqnarray*}
This establishes $\Lambda(p)=\phi(p(w))$ for all $p\in \cl P_n^{\cl A}$.  

To prove $w\in \widehat{D},$ let $p$ be a polynomial in $\cl P_n.$ Then $p_1(z)=p(z) e$ is a polynomial in $\cl P_n^{\cl A};$ thus, 
$\Lambda(p_1)=\phi(p_1(w))=p(w).$ This, using Inequality (\ref{cont}), implies that 
$$
|p(w)|=|\Lambda(p_1)|\le \sup_{z\in \overline{D}}|p(z)|=\sup_{z\in D}|p(z)|.
$$
Hence, $w\in \widehat{D}.$  

Finally, we will establish the assertions made in the moreover part. First, suppose $n=1,$ that is, we are in the one-variable situation. Take $p(z)=e z,$ then $\Lambda(p)=w.$ Also, as we noted at the beginning of the proof, there exists a point $a\in D$ such that 
$\Lambda(p)e=p(a)=e a$, which implies that $w=a\in D.$  

We now turn to the general $n$ case. Let $D=D_1\times \cdots \times D_n$ for some subsets $D_1, \dots, D_n$ of $\bb C.$ The idea here is very similar to case $n=1$. Note that $\Lambda(r_i)=\phi(r_i(w))=\phi(w_i e)=w_i$. Additionally, as mentioned earlier, some $a$ exists in $D$ such that $\Lambda(r_i) e=r_i(a)=e a_i.$ Thus $w_i=a_i\in D_i$; hence, $w\in D_1\times \cdots \times D_n=D.$  

Lastly, let $D=\{z\in \bb C^n: ||z-b||_{\bb C^n}<r\}$ for some $b\in \bb C^n$ and positive real number $r.$ Let $p(z)=\sum_{i=1}^n (\overline{w_i-b_i})e(z_i-b_i e)$. Then 
$$
\Lambda(p)=\phi(p(w))=\sum_{i=1}^n |w_i-b_i|^2=||w-b||^2_{\bb C^n}.
$$
Also, there exists some $a\in D$ such that $\Lambda(p) e=p(a)$, which implies that 
$||w-b||^2_{\bb C^n}=\sum_{i=1}^n (\overline{w_i-b_i})(a_i-b_i)=\langle{w-b,a-b}\rangle_{\bb C^n}$. Thus, using the  
Cauchy-Schwarz inequality, we infer $||w-b||_{\bb C^n}\le ||a-b||_{\bb C^n}<r.$ Thus, $w\in D$. A similar set of arguments will work when $D$ is a closed Euclidean ball in $C^n$. This completes the proof.  
\end{proof}

We are ready to prove our version of the KS theorem for topological spaces of functions of several variables that take values in a fixed complex Banach algebra with identity. Before we state our result, we first fix some notations for it. Recall that we have fixed $\cl A$ for Banach algebra with identity $e$. Let $D$ be a bounded subset of $\bb C^n$ and $\cl F(D, \mathcal{A})$ denotes the vector space of all $\mathcal{A}$-valued functions defined on $D$. We assume that when $n\ge 2$, $D$ is either a Cartesian product of $n$ bounded subsets of $\bb C$ or an Euclidean ball in $\bb C^n$.

\begin{thm}\label{2.3} Let $\cl X$ be a subset of $\cl F(D, \mathcal{A})$ equipped with a topology that satisfies the following properties:
\begin{enumerate}[(i),nosep]
\item  for each $z\in D$, the evaluation map $f \mapsto  f(z)$ is continuous;
\item  $\cl X$ contains the polynomial set $\mathcal{P}_n^\mathcal{A}$ as a dense subset.
\end{enumerate}

\vspace{.2 cm}

\noindent Let $\Lambda :\cl X \to \mathbb{C}$ be a non-zero continuous function such that $\Lambda({0})=0$  and for every $p$ and $q$ in $\mathcal{P}_n^\mathcal{A}$ 
the polynomial  
$$
\big(\Lambda(p)-\Lambda(q)\big)e-(p-q) 
$$ 
has a zero in $D$. Then there exist $w\in D$ and a multiplicative linear functional $\phi$ on $\mathcal{A}$ such that
	\begin{equation*}
		\Lambda(f)= \phi(f(w)) \quad \text{for every } f\in \cl X.
	\end{equation*} 
\end{thm}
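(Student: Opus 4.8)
The plan is to reduce the problem to the already-established polynomial version, Theorem \ref{Result1}, and then transport the conclusion from the dense subset $\cl P_n^{\cl A}$ to all of $\cl X$ by a continuity-and-density argument. Since the topology on $\cl X$ is arbitrary and not assumed metrizable, I would work with nets rather than sequences throughout.

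First I would restrict $\Lambda$ to $\cl P_n^{\cl A}$. This restriction inherits $\Lambda(0)=0$ and the zero-in-$D$ hypothesis verbatim, so the only thing to check before invoking Theorem \ref{Result1} is that the restriction is non-zero. This is immediate: if $\Lambda$ vanished on the dense set $\cl P_n^{\cl A}$, then continuity of $\Lambda$ would force $\Lambda\equiv 0$ on $\cl X$, contradicting the hypothesis that $\Lambda$ is non-zero. Theorem \ref{Result1} then supplies a point $w\in\widehat{D}$ and a multiplicative linear functional $\phi$ on $\cl A$ with $\Lambda(p)=\phi(p(w))$ for all $p\in\cl P_n^{\cl A}$. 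Crucially, because $D$ is assumed to be either a Cartesian product of bounded sets in $\bb C$ or a Euclidean ball (and trivially so when $n=1$), the moreover part of Theorem \ref{Result1} upgrades the membership $w\in\widehat{D}$ to $w\in D$, which is precisely the point at which we wish to evaluate.

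Next I would extend this identity to all of $\cl X$. Fix $f\in\cl X$ and, using density, choose a net $(p_\alpha)$ in $\cl P_n^{\cl A}$ with $p_\alpha\to f$ in $\cl X$. Continuity of $\Lambda$ gives $\Lambda(p_\alpha)\to\Lambda(f)$. On the other side, since $w\in D$, property (i) guarantees that the evaluation map $g\mapsto g(w)$ is continuous, so $p_\alpha(w)\to f(w)$ in $\cl A$; and $\phi$, being a character on a unital Banach algebra, is automatically norm-continuous, whence $\phi(p_\alpha(w))\to\phi(f(w))$. Passing to the limit in the polynomial identity $\Lambda(p_\alpha)=\phi(p_\alpha(w))$ then yields $\Lambda(f)=\phi(f(w))$, as required.

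The genuinely substantive content of the theorem is already packaged inside Theorem \ref{Result1}, where the Kowalski-S\l odkowski machinery and the polynomial-convex-hull estimates reside; the argument above is therefore a soft topological extension. I expect the only points demanding care to be the insistence on nets in place of sequences, since no countability or metrizability is assumed on $\cl X$, and the verification that $w$ genuinely lands in $D$ rather than merely in $\widehat{D}$. The latter is exactly why the structural hypotheses on $D$ (product set or ball) were imposed, and I anticipate no serious obstacle beyond carefully handling these two points.
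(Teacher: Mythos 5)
Your proposal is correct and takes essentially the same approach as the paper's own proof: restrict $\Lambda$ to $\cl P_n^{\cl A}$, invoke the moreover part of Theorem \ref{Result1} (valid because $D$ is assumed to be a product of bounded planar sets or a Euclidean ball) to obtain $w\in D$ and the character $\phi$, and then pass to all of $\cl X$ by continuity and density. The details you add---verifying the restriction is non-zero, using nets since $\cl X$ need not be metrizable, and noting the automatic continuity of $\phi$ together with hypothesis (i) to justify the limit on the right-hand side---are exactly the points the paper's terse proof leaves implicit.
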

\begin{proof}
Clearly, the restriction $\Lambda: \mathcal{P}_n^\mathcal{A}\longrightarrow \mathbb{C}$  satisfies the hypotheses of Theorem \ref{Result1}. Then, using the moreover part of Theorem \ref{Result1}, there exist $w\in D$ and a multiplicative linear functional $\phi$ on $\mathcal{A}$ such that $\Lambda(p)= \phi(p(w))$ for every  
$p\in \cl P_n^{\cl A}$ . Now the continuity of $\Lambda$ and the denseness of $\mathcal{P}_n^{\cl A}$ in $\cl X$ yield    
	\begin{equation*}
		\Lambda(f)= \phi(f(w)) \quad \text{for all } f\in \cl X.
	\end{equation*}
\end{proof}

\begin{remark}
The hypothesis ``$ \big(\Lambda(p)-\Lambda(q)\big)e-(p-q)$ has a zero in $D$" in Theorems \ref{Result1} and \ref{2.3} for the case when 
$n=1, D=\bb D \ {\rm and} \ \cl A=\bb C$ assumes nothing but that the polynomials $\big(\Lambda(p)-\Lambda(q)\big)e-(p-q)$ does not belong to the class of polynomials that are outer functions. 
Consequently, our hypothesis is in line with the hypothesis of the analogues of the GKZ theorem and the KS theorem recorded in the Introduction.  
\end{remark}

\begin{remark} Note that Theorem \ref{2.3} generalizes our earlier work (Theorem \ref{JLS}) from \cite{jaikishan2024multiplicativity} in three ways. It extends Theorem \ref{JLS} to topological spaces that are not necessarily vector spaces. In addition, it extends Theorem \ref{JLS} to vector-valued functions of several variables. As a result, it generalizes Theorem \ref{MR1}—Mashreghi and Ransford's version of the GKZ theorem for Hardy spaces—to vector-valued functions of several variables. Furthermore, it extends Theorem \ref{SD-h}—the recent extension of the KS theorem—to vector-valued functions of several variables. It is worth noting that the methods employed in proving Theorem \ref{2.3} are entirely independent of the ones used in proving Theorem \ref{JLS}.
\end{remark}

\begin{remark}
We would like to now compare Theorem 3.2 with Theorem \ref{Sam}
(\cite[Theorem 1.4]{sampat2021cyclicity}) that also is a several variables analogue of the GKZ theorem for function spaces. First, we note that Theorem \ref{Sam} is for Banach spaces of scalar-valued functions, whereas Theorem \ref{2.3} deals with topological spaces of vector-valued functions.
Next, we compare the domains of the function spaces considered by the two theorems. Theorem 3.2, in the several variables case, is proved under the assumption that the domain is either a Euclidean ball or a Cartesian product. In contrast, it may seem that the domain of functions in Theorem \ref{Sam} includes a big class of subsets of $\bb C^n$. But, we want to point out that Theorem \ref{Sam} assumes the domain to be a ``maximal domain" (hypothesis JS2 in Theorem 1.6), and if we assume the domain $D$ in Theorem \ref{2.3} to be a maximal domain instead of being a specific set such as a Euclidean ball or a Cartesian product (as done in Theorem \ref{2.3}), then indeed with the help of Theorem \ref{Result1}, for this case also we would obtain the map $\Lambda$ to be a point evaluation for the scalar-valued case and composition of a multiplicative linear functional and a point evaluation for the vector-valued case. Moreover, Theorem \ref{Sam}
doesn't cover Dirichlet spaces $\cl D_\alpha$ for $\alpha>1$ as the maximal domain for functions in these Dirichlet
spaces is $\overline{\bb D^n}$ and not $\bb D^n$. But, these are covered by Theorem \ref{2.3} as they satisfy its hypotheses. 
\end{remark}

\noindent The following is a list of some well-known function spaces that satisfy the conditions of Theorem \ref{2.3}. 

 \begin{itemize}
 \item The vector-valued Hardy spaces $H^p(\bb D^n, \bb C^m)$ for $0<p\le \infty,$ where we consider the weak-start topology for $p=\infty;$   
\item the Drury-Arveson Space $\mathcal{H}_n ^2$; 
\item the Dirichlet-type spaces $\mathcal{D}_{\alpha}$ for $\alpha \in \mathbb{R}$;
\item the polydisc algebra $A(\mathbb{D}^n)$;
\item the ball algebra $A(\bb B_n)$;
\item the Bergman spaces $A^p_\alpha(0<p<\infty)$ defined on the open unit ball $\bb B_n$ of $\bb C^n$;
\item the little Bloch space $\mathcal{B}_0(\bb B_n)$;

\item the space VMOA of functions of vanishing mean oscillation defined on $\bb B_n$.
\end{itemize}

Although we have already compared Theorem \ref{2.3} with Theorems \ref{Sam} and \ref{SD-h}, only to bring out this comparison more prominently for the reader, we give below the particular case of Theorem \ref{2.3} for $\cl X=H^p(\mathbb{D}^n, \bb C^m).$ For $0<p\le\infty$, 
let $H^p(\bb D^n,\bb C^m)$ denote the Hardy space of $\bb C^m$-valued functions on $\bb D^n$. Recall that $\bb C^n$ with Euclidean norm is a Banach algebra with identity $\mathbf{1}=(1,\dots,1)^T$. Note that in Theorem \ref{2.3} we do not require the norm of the unit element to be 1.

\begin{cor}\label{5.3}
For $n, m\in \bb N$ and $0<p\le \infty$, let $\Lambda : H^p(\mathbb{D}^n, \mathbb{C}^m) \to \mathbb{C}$ be a non-zero continuous (weak-star continuous for $p=\infty$) function such that $\Lambda({0})=0$ and for polynomials $p$ and $q$ in $\cl P^{\bb C^{m}}_{n}$ the polynomial $\big(\Lambda(p)-\Lambda(q)\big){\bf 1}-(p-q)$ has a zero in $\bb D^n$. Then there exist $w\in \mathbb{D}^n$  and $1\le k\le m$ such that
		\begin{equation*}
			\Lambda(f)= \langle f(w),e_k \rangle_{\bb{C}^m}
		\end{equation*}
			for all $f\in H^p(\bb D^n,\bb C^m)$, where $\{e_1,\dots,e_m\}$ is the standard orthonormal basis of $\bb C^m$. 
	\end{cor}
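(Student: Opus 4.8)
The plan is to obtain Corollary \ref{5.3} as a direct specialization of Theorem \ref{2.3}, taking $\cl A=\bb C^m$ with coordinatewise multiplication (so the identity is $e=\mathbf 1$), $\cl X=H^p(\bb D^n,\bb C^m)$, and $D=\bb D^n$. Since $\bb D^n=\bb D\times\cdots\times\bb D$ is a Cartesian product of $n$ bounded subsets of $\bb C$, the allowed-domain restriction for $n\ge 2$ is met, and case (ii) of the moreover part of Theorem \ref{Result1} guarantees that the point produced by Theorem \ref{2.3} lands in $\bb D^n$ itself rather than merely in its polynomial convex hull. The standing zero hypothesis in the corollary, namely that $\big(\Lambda(p)-\Lambda(q)\big)\mathbf 1-(p-q)$ has a zero in $\bb D^n$ for all $p,q\in\cl P_n^{\bb C^m}$, is precisely the zero condition of Theorem \ref{2.3} with $e=\mathbf 1$. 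Thus the entire task reduces to (a) checking the two topological hypotheses of Theorem \ref{2.3} and (b) identifying the character $\phi$ on $\bb C^m$ that the theorem returns.

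For hypothesis (i), I would recall that for each $w\in\bb D^n$ the evaluation $f\mapsto f(w)$ is continuous on $H^p(\bb D^n,\bb C^m)$: for $0<p\le\infty$ this is the standard bound $\norm{f(w)}\le C(w)\norm{f}_{H^p}$, and in the endpoint case $p=\infty$ the functional is realized as integration of $f$ against a Cauchy-type kernel lying in $L^1$, hence is weak-star continuous, matching the weak-star topology placed on $\cl X$. For hypothesis (ii), the $\bb C^m$-valued polynomials $\cl P_n^{\bb C^m}$ are dense in $H^p(\bb D^n,\bb C^m)$ for $0<p<\infty$ (approximate $f$ by its dilates $f_r(z)=f(rz)$ and the dilates by Taylor partial sums, working coordinatewise), and they are weak-star dense for $p=\infty$. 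With (i), (ii), and the zero condition verified, Theorem \ref{2.3} yields $w\in\bb D^n$ and a multiplicative linear functional $\phi$ on $\bb C^m$ with $\Lambda(f)=\phi(f(w))$ for every $f\in H^p(\bb D^n,\bb C^m)$.

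It then remains to classify $\phi$. Writing $\{e_1,\dots,e_m\}$ for the standard basis, these are orthogonal idempotents with $e_je_k=\delta_{jk}e_j$ and $\sum_{k}e_k=\mathbf 1$. Applying $\phi$ to $e_k^2=e_k$ forces $\phi(e_k)\in\{0,1\}$, applying it to $e_je_k=0$ for $j\ne k$ forces $\phi(e_j)\phi(e_k)=0$, and $\phi(\mathbf 1)=1$ forces $\sum_k\phi(e_k)=1$; hence exactly one $\phi(e_k)$ equals $1$ and the rest vanish. Consequently, for $a=\sum_k a_ke_k$ we get $\phi(a)=a_k=\inp{a}{e_k}_{\bb C^m}$, and substituting $a=f(w)$ gives $\Lambda(f)=\inp{f(w)}{e_k}_{\bb C^m}$, as asserted. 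The only genuinely delicate points I expect are the density of polynomials and (weak-star) continuity of point evaluations in the endpoint case $p=\infty$ and the quasi-Banach range $0<p<1$; the remaining steps are a routine application of Theorem \ref{2.3} together with the elementary description of the characters of the finite-dimensional algebra $\bb C^m$.
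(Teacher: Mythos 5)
Your proposal is correct and follows exactly the route the paper intends: Corollary \ref{5.3} is stated as a direct specialization of Theorem \ref{2.3} with $\cl A=\bb C^m$ (coordinatewise multiplication, identity $\mathbf 1$), $D=\bb D^n$ a Cartesian product so the ``moreover'' case (ii) of Theorem \ref{Result1} applies, followed by the standard identification of the characters of $\bb C^m$ with the coordinate functionals $a\mapsto\inp{a}{e_k}$. Your verification of the density of polynomials and the (weak-star) continuity of point evaluations, and your idempotent argument classifying $\phi$, fill in precisely the routine details the paper leaves implicit.
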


In Theorem \ref{2.3}; hence in Corollary \ref{5.3}, we assume the function $\Lambda$ to be continuous, whereas the comparative result, Theorem \ref{SD-h}, which again is an analogue of the KS theorem for the Hardy spaces, do not assume the function to be continuous. With Example \ref{cont-nec}, we establish the necessity for it to obtain the desired conclusion. Theorem \ref{Sam} is also a version of the GKZ for Hardy spaces on polydisc that assumes continuity, but the continuity assumption is never justified in \cite{sampat2021cyclicity}, which could be because this paper is inspired by work in \cite{kou2017linear}, which in turn is motivated by a problem in the area of geophysical imaging where the continuity assumption seems natural. 

\begin{ex}\label{cont-nec}
Let $\mathcal{B}$ be a Hamel basis for $H^2(\mathbb{D})$ obtained by extending the orthonormal basis $\mathcal{B}_1=\{z^n:n \in \mathbb{N}\cup\{0\}\}.$ 
 Let $f\in H^2(\mathbb{D})$. Then there exist elements $z^{r_1}, z^{r_2}, \dots, z^{r_n}$ in $\mathcal{B}_1$ and $f_{t_1},f_{t_2},\dots, f_{t_m}$ in $\cl B\setminus \mathcal{B}_1$, and scalars $c_1,c_2,\dots c_n, d_{1}, d_2,\dots ,d_{m}$ in $\mathbb{C}$ such that 	
 
\begin{equation*}
f=\sum_{i=1}^{n}c_iz^{r_i}+\sum_{i=1}^{m}d_if_{t_i}=p+g
\end{equation*}
where $p=\sum_{i=1}^{n}c_iz^{r_i}$ and $g=\sum_{i=1}^{m}d_if_{t_i}$. For a fixed $w \in \mathbb{D}$, define $F$ at $f$ by 
\begin{equation*}
F(f)=p(w)+ \delta
\end{equation*}
where $\delta$ is a non-zero scalar. It is easy to see that $F$ is well-defined and satisfies the hypotheses of Theorem \ref{2.3}, but $F$ is not linear. This implies $F$ cannot be continuous because otherwise, given Theorem 3.2, $F$ must be a point evaluation, which forces it to be linear. 
\end{ex}

Finally, we conclude this section with the following characterization of weighted composition operators between two Hardy spaces.

\begin{thm}\label{w-comp} For $0<p, q\le \infty,$ let $T: H^p(\bb D^n) \to H^q(\bb D^m)$ be a continuous function, where we consider weak-start topology on $H^p(\bb D^n)$ when  $p=\infty.$ Suppose $T(0)=0, \ T(1)(w)\ne 0$ for each $w\in \bb D^m$. If given any pair $p, q$ of complex-valued polynomials in $n$-variables and $w\in \bb D^m$ the polynomial 
$$
\big(Tp(w)-Tq(w)\big)1-\big(T1(w)\big)(p-q)
$$
has a zero in $\bb D^n$, then there exist $\psi\in H^q(\bb D^m)$ and an analytic function $\phi:\bb D^m\to \bb D^n$ such that
$$
T(f)= \psi\cdot (f\circ \phi)
$$ 
for every $f\in H^p(\bb D^n).$ 
\end{thm}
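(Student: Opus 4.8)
The plan is to reduce the statement to the scalar-valued, several-variables version of the Kowalski–S\l odkowski theorem already established in Theorem \ref{2.3}, by freezing the second variable. For each fixed $w\in\bb D^m$, I would define the evaluation map $T_w:H^p(\bb D^n)\to\bb C$ by $T_w(f):=Tf(w)$. The first task is to verify that $T_w$ satisfies the hypotheses of Theorem \ref{2.3} with $\cl A=\bb C$ and domain $D=\bb D^n$. Continuity of $T_w$ follows from the continuity of $T$ together with the continuity of the point evaluation $g\mapsto g(w)$ on $H^q(\bb D^m)$; the condition $T_w(0)=0$ is immediate from $T(0)=0$; and the zero-in-$\bb D^n$ hypothesis for the polynomial $\big(T_w(p)-T_w(q)\big)1-(p-q)$ is exactly the displayed hypothesis of the theorem, once one divides through by the nonzero scalar $T1(w)$. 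This last normalization is the one subtlety: the hypothesis is stated with the weight $T1(w)$ in front of $(p-q)$, so I would first handle this by noting $T1(w)\ne0$ and rescaling, i.e. applying Theorem \ref{2.3} to the functional $f\mapsto T_w(f)/T1(w)$, or equivalently observing that the rescaled map meets the normalized KS condition.

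Having checked the hypotheses, Theorem \ref{2.3} (with $\cl A=\bb C$, so that $\phi$ is the identity character and the conclusion is a genuine point evaluation up to a constant) yields, for each fixed $w$, a point $a_w\in\bb D^n$ and a nonzero constant $c_w$ with $Tf(w)=c_w\,f(a_w)$ for all $f\in H^p(\bb D^n)$. This is precisely the pointwise analogue of the one-variable argument in Theorem \ref{SD-pre1}. Following that template, I would extract $\psi$ and $\phi$ by plugging in distinguished polynomials: taking $f\equiv1$ gives $c_w=T1(w)$, and taking the coordinate functions $f(z)=z_i$ gives the $i$th coordinate of $a_w$ as $a_w^{(i)}=Tz_i(w)/T1(w)$. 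I would then \emph{define} $\psi:=T1$ and $\phi:=\big(Tz_1/\psi,\dots,Tz_n/\psi\big)$, so that by construction $Tf=\psi\cdot(f\circ\phi)$ holds at least on polynomials, and then on all of $H^p(\bb D^n)$ by density and continuity.

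The remaining work, and the step I expect to be the main obstacle, is to verify the claimed \emph{regularity} of $\psi$ and $\phi$: namely that $\psi\in H^q(\bb D^m)$ and that $\phi:\bb D^m\to\bb D^n$ is analytic with image genuinely inside $\bb D^n$ (not merely in the polynomial hull). That $\psi=T1\in H^q(\bb D^m)$ is immediate since $1\in H^p(\bb D^n)$ and $T$ maps into $H^q(\bb D^m)$. For $\phi$, each component $Tz_i/T1$ is a ratio of $H^q(\bb D^m)$ functions, hence holomorphic on the open set where $T1\ne0$; since $T1$ is nowhere zero on $\bb D^m$ by hypothesis, each $\phi_i$ is holomorphic on all of $\bb D^m$, so $\phi$ is analytic. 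The containment $\phi(\bb D^m)\subseteq\bb D^n$ is where the several-variables geometry enters: here I would invoke the fact that $\bb D^n$ is a Cartesian product (so Theorem \ref{2.3} delivers $a_w\in\bb D^n$ rather than merely in $\widehat{\bb D^n}$), which guarantees $\phi(w)=a_w\in\bb D^n$ for every $w$. The only genuinely delicate point is confirming that the weak-star topology assumption in the $p=\infty$ case is compatible with the continuity of $T_w$, and that polynomials are weak-star dense in $H^\infty(\bb D^n)$; modulo that standard fact the density-and-continuity extension from polynomials to all of $H^p(\bb D^n)$ goes through verbatim.
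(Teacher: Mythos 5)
Your proposal is correct and follows essentially the same route as the paper: freeze $w$, normalize by $T1(w)$, apply the scalar-valued case of Theorem \ref{2.3} (the paper invokes its specialization, Corollary \ref{5.3}, with $m=1$) to get $Tf(w)=T1(w)f(a_w)$, then recover $\psi=T1$ and $\phi=(Tz_1,\dots,Tz_n)/T1$ by plugging in $f\equiv 1$ and the coordinate functions. Your extra care about the analyticity of $\phi$ and the membership $\phi(\bb D^m)\subseteq \bb D^n$ via the Cartesian-product case of Theorem \ref{Result1} only makes explicit what the paper leaves implicit.
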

\begin{proof}  For each fixed $w\in \bb D^m,$ define the map $T_w: H^p(\bb D^n) \to \mathbb{C}$ given by $T_w(f)=(Tf)(w)$. It is easy to see that $T_w/(T1)(w)$ satisfies all the conditions in Corollary \ref{5.3}, so there exists $\zeta\in\bb D^n$ such that $(Tf)(w)=(T1)(w)f(\zeta)$ for all $f\in H^p(\bb D^n)$. By taking $f=z_i$, the $i^{th}$ coordinate function, we get $\zeta=(Tz_1(w),\dots, Tz_n(w))/(T1)(w) $. Let $\psi=T(1)$ and define $\phi(w)=(Tz_1(w),\dots, Tz_n(w)) /\psi(w)$.  Then $\phi(w) \in\bb D^n$ and $(Tf)(w)=\psi(w)f(\phi(w))$ for every $f\in H^p(\bb D^n)$. This completes the proof.
\end{proof}

\section{A Multiplicative GKZ Theorem for Hardy spaces}\label{converse}
In this section, we shall investigate a multiplicative analogue of the GKZ theorem within the context of Hardy spaces. By a multiplicative analogue of the GKZ theorem, we mean a result that asserts the linearity of a complex-valued multiplicative function on an unital Banach algebra that assumes a value in the spectrum. The first positive result concerning multiplicative maps taking values in the spectrum was obtained by Maouche \cite{maouche1996formes}.    

\begin{thm}[Maouche]\label{Mao}
	Let $\mathcal{A}$ be a unital Banach algebra and $\phi: \mathcal{A} \to \mathbb{C}$ be a map satisfying 
	\begin{enumerate}[label=(\roman*),nosep]
		\item $\phi(x)\in \sigma(x)$,
		\item $\phi(xy)=\phi(x)\phi(y)$
	\end{enumerate}
  for all $x,y \in \mathcal{A}$. Then there exists a unique character $\psi_\phi$ on $\mathcal{A}$ such that $\phi(x)=\psi_\phi(x)$ for all $x\in G_1(\mathcal{A})$, where $G_1(\mathcal{A})$ is the connected component of $G(\mathcal{A})$ containing the identity. 
\end{thm}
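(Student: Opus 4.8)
The plan is to realise the character as a logarithmic derivative of $\phi$ along one-parameter subgroups and then to recognise it, via the GKZ theorem (Theorem \ref{GKZ}), as a genuine character. First I would record the elementary consequences of the two hypotheses: since $\sigma(e)=\{1\}$ and $\sigma(0)=\{0\}$ we get $\phi(e)=1$ and $\phi(0)=0$, while multiplicativity forces $\phi(x)\phi(x^{-1})=\phi(e)=1$, so $\phi$ is non-zero on $G(\mathcal{A})$ with $\phi(x^{-1})=\phi(x)^{-1}$. Because $G_1(\mathcal{A})$ is generated by the range of the exponential map, it is enough to understand $\phi$ on elements $\exp(a)$ and to build one character matching it there: indeed, if a character $\psi_\phi$ satisfies $\phi(\exp a)=\exp(\psi_\phi(a))=\psi_\phi(\exp a)$, then multiplicativity of both maps on products $\exp(a_1)\cdots\exp(a_k)$ propagates the equality to all of $G_1(\mathcal{A})$.

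Fix $a\in\mathcal{A}$ and set $g_a(t):=\phi(\exp(ta))$ for $t\in\mathbb{R}$. Since $\exp(sa)$ and $\exp(ta)$ commute, multiplicativity gives $g_a(s+t)=g_a(s)g_a(t)$, so $g_a$ is a homomorphism of $(\mathbb{R},+)$ into $(\mathbb{C}^{\ast},\cdot)$; the spectral hypothesis together with the spectral mapping theorem gives $g_a(t)\in\sigma(\exp(ta))=\exp(t\sigma(a))$, whence $|g_a(t)|\le \exp(|t|\,r(a))$, where $r(a)$ denotes the spectral radius. A locally bounded homomorphism of this type must have the form $g_a(t)=\exp(t\,\psi(a))$; taking principal logarithms for $t$ near $0$, where $g_a(t)$ is close to $1$, shows that $\psi(a):=\lim_{t\to0}t^{-1}\log g_a(t)$ exists, and the inclusion $g_a(t)\in\exp(t\sigma(a))$ forces $\psi(a)\in\sigma(a)$. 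Thus $\psi:\mathcal{A}\to\mathbb{C}$ satisfies $\psi(a)\in\sigma(a)$ and $\psi(e)=1$, is real-homogeneous, is additive on commuting pairs (again by the commuting-exponential identity), and obeys $\phi(\exp a)=\exp(\psi(a))$.

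The main obstacle is to upgrade these partial properties to continuous complex-linearity of $\psi$; once that is achieved the GKZ theorem applies at once, since a linear functional with $\psi(a)\in\sigma(a)$ cannot vanish on $G(\mathcal{A})$ and has $\psi(e)=1$, hence is multiplicative. Two sub-issues arise. Complex homogeneity $\psi(ia)=i\psi(a)$ is exactly the holomorphy of $\lambda\mapsto\phi(\exp(\lambda a))$ on $\mathbb{C}$: the local analysis above only produces an $\mathbb{R}$-linear exponent, and the Cauchy--Riemann condition fails precisely when $\sigma(a)$ contains the genuine circle traced by the selection $\theta\mapsto e^{-i\theta}\psi(e^{i\theta}a)$. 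To exclude this I would invoke Aupetit's theory of analytic multifunctions together with the subharmonicity of $\lambda\mapsto\log r(\exp(\lambda a))$ (Vesentini's theorem), which should force the selection $\lambda\mapsto\phi(\exp(\lambda a))$ of the analytic multifunction $\lambda\mapsto\sigma(\exp(\lambda a))$ to be holomorphic; the same regularity supplies the continuity of $\psi$ needed to pass to the limit in the Baker--Campbell--Hausdorff expansion $\psi\big((a+b)+\tfrac{s}{2}[a,b]+O(s^2)\big)=\psi(a)+\psi(b)$ and thereby obtain additivity across non-commuting pairs. This analytic step is the technical heart of the argument and the part I expect to be hardest.

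Finally, uniqueness is immediate: $G_1(\mathcal{A})$ is open and contains a ball about $e$, so any two characters agreeing with $\phi$ on $G_1(\mathcal{A})$ agree on a neighbourhood of $e$; evaluating $\psi_i(e+ta)=1+t\,\psi_i(a)$ for small $t$ and using linearity gives $\psi_1=\psi_2$ on all of $\mathcal{A}$. Taking $\psi_\phi:=\psi$, the identity $\phi(\exp a)=\exp(\psi_\phi(a))=\psi_\phi(\exp a)$ combined with the multiplicativity of both $\phi$ and $\psi_\phi$ on products of exponentials yields $\phi=\psi_\phi$ throughout $G_1(\mathcal{A})$, completing the proof.
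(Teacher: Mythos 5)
First, a point of orientation: the paper does not prove this statement at all—it is quoted as background from Maouche \cite{maouche1996formes} and used as a black box in Section \ref{converse}—so your attempt can only be measured against Maouche's original argument, whose technical core is precisely the subharmonic/analytic-multifunction machinery you allude to. Your skeleton is the right one, and the soft parts are correct: $\phi(e)=1$, non-vanishing on invertibles, the reduction of $G_1(\mathcal{A})$ to finite products of exponentials, and the uniqueness argument are all fine. One step, however, is justified incorrectly (though it is salvageable): it is \emph{not} true that a locally bounded homomorphism of $(\mathbb{R},+)$ into $(\mathbb{C}^{\ast},\cdot)$ must have the form $t\mapsto e^{ct}$. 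Taking $g(t)=e^{ih(t)}$ with $h$ a discontinuous additive function built from a Hamel basis gives a bounded, multiplicative, non-exponential counterexample; boundedness controls the modulus of $g_a$ but says nothing about its argument. What actually rescues the step is the full spectral constraint rather than the growth bound you extracted from it: since $\sigma(a)$ is bounded, $\sup\{|u-1|:u\in\exp(t\sigma(a))\}\to 0$ as $t\to 0$, so $g_a(t)\to1$; a homomorphism continuous at one point is continuous everywhere, hence $g_a(t)=e^{c_a t}$, and comparing with $\exp(t\sigma(a))$ for small $t$ (where the $2\pi i\mathbb{Z}$ ambiguity must vanish) pins $c_a\in\sigma(a)$.

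The genuine gap is that the two steps you yourself call the technical heart are never proved, only asserted with ``should force.'' (i) Complex homogeneity, i.e.\ holomorphy of $\lambda\mapsto\phi(\exp(\lambda a))$: the spectral constraint alone genuinely permits $\phi(\exp(\lambda a))=e^{c\lambda+d\bar\lambda}$ with $d\neq0$ whenever $\sigma(a)$ contains the circle $\{c+de^{i\theta}:\theta\in\mathbb{R}\}$, and ruling this out is exactly where Maouche's proof spends its Vesentini/Aupetit subharmonicity and scarcity arguments; naming that machinery is not the same as applying it, and no mechanism is indicated for how subharmonicity of $\lambda\mapsto\log r(\exp(\lambda a))$ would constrain the particular selection $\phi$ makes. (ii) The Baker--Campbell--Hausdorff step has a circularity: to pass from $\psi(x_s)=\psi(a)+\psi(b)$ (with $x_s=(a+b)+\tfrac{s}{2}[a,b]+O(s^2)\to a+b$) to $\psi(a+b)=\psi(a)+\psi(b)$ you need continuity of $\psi$ at $a+b$; but continuity of $\psi$ would follow from the linearity you are trying to establish (a linear functional with $\psi(x)\in\sigma(x)$ satisfies $|\psi(x)|\le\|x\|$), and without it the only available tool, upper semicontinuity of the spectrum, yields merely $\psi(a)+\psi(b)\in\sigma(a+b)$, which is spectral additivity, not additivity of $\psi$. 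So what you have is a correct plan with the load-bearing steps missing, and filling them in amounts to reproving Maouche's theorem rather than deducing it.
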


In the same paper \cite{maouche1996formes}, Maouche gave a concrete example to establish that even on the nicest Banach algebra, namely $C[0,1]$, one can't guarantee the linearity of $\phi$ on the entire Banach algebra. It is noteworthy that Maouche's function was not continuous; hence, any hope of proving that $\phi$ itself is linear, therefore, a character would have to include the continuity of $\phi$ as an assumption. 

Recently, Tou$\acute{\rm r}$e and Brits, along with their other collaborators, have extensively studied multiplicative versions of the GKZ and its generalization–the KS theorem–on $C^*$-algebras; for their specific results refer to \cite{brits2021multiplicative, brits2023multiplicative, toure2017multiplicative, toure2018some}. In \cite{brits2021multiplicative}, they showed that a multiplicative continuous complex-valued function $F$ on a $C^*$-algebra $A$ satisfying $F(a)\in \sigma(a)$ must be linear. However, the result for a general Banach algebra stays where Maouche left it.  

We study an analogous result for the Hardy spaces. Before we give the statement, we would like to note that, in light of Theorem \ref{MR1}, a non-zero continuous multiplicative linear functional on a Hardy space must be a point evaluation. 

 \begin{thm} For $0<p< \infty$, let $F:H^p(\bb D) \to \mathbb{C} $ be a continuous function such that $F(f)\in Im(f)$ and $F(fg)=F(f)F(g)$, whenever $f,g\in H^p(\bb D)$ with $fg\in H^p(\bb D)$. Then there exists a point $w\in \bb D$ such that $F(f)=f(w)$ for every nowhere-vanishing $f\in H^p(\bb D)$. 
\end{thm}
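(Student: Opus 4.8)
The plan is to exploit the multiplicativity of $F$ together with the hypothesis $F(f)\in Im(f)$ to build a point evaluation. First I would examine what multiplicativity forces on constants. Taking $f=g=1$, we get $F(1)=F(1)^2$, so $F(1)\in\{0,1\}$; since $F(1)\in Im(1)=\{1\}$, we conclude $F(1)=1$. More generally, for a nonzero constant $c$, $F(c)\in Im(c)=\{c\}$ forces $F(c)=c$. The key structural observation is that for a nowhere-vanishing $f\in H^p(\bb D)$, the reciprocal $1/f$ is holomorphic, and whenever products land back in $H^p$ the relation $F(f)F(1/f)=F(1)=1$ shows $F(f)\ne 0$, and in fact $F(f)^{-1}=F(1/f)$ whenever both are defined. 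This is the analogue of "non-vanishing on invertibles," and it is exactly the input needed to invoke a GKZ-type rigidity.

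The main idea is to reduce to a linear functional and apply Theorem \ref{MR1}. The difficulty is that $F$ is only assumed multiplicative, not linear, so I cannot directly feed it to Theorem \ref{MR1}. Instead I would first establish linearity, or more precisely establish that $F$ agrees with a point evaluation on a rich enough class. I would start by computing $F$ on the coordinate function $z$: set $w:=F(z)$, and since $F(z)\in Im(z)=\bb D$ (using $0<p<\infty$ and that $z$ attains values throughout $\bb D$), we have $w\in\bb D$. By multiplicativity, $F(z^k)=w^k$ for all $k\ge 0$, and combined with $F(c)=c$ for constants, $F$ agrees with evaluation at $w$ on all monomials $cz^k$. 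The crux is to propagate this from monomials to all nowhere-vanishing functions.

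The hard part will be handling the nonlinearity when passing from monomials to general nowhere-vanishing $f$. Here the multiplicative structure is essential: for a nowhere-vanishing $f$, one can write (via the inner-outer and exponential representation) $f=e^{g}$ for a suitable holomorphic $g$ when $f$ is outer and nowhere-vanishing, or more robustly factor $f$ into pieces on which the product hypothesis $fg\in H^p$ is controlled. I would argue that for nowhere-vanishing $f$, both $f$ and $1/f$ behave well under $F$, and use continuity of $F$ together with the density of polynomials (or of nice functions like finite products of linear factors $z-\lambda$ with $|\lambda|\ge 1$) to force $F(f)=f(w)$. Concretely, for $|\lambda|\ge 1$ the linear factor $z-\lambda$ is nowhere-vanishing on $\bb D$, multiplicative evaluation gives $F(z-\lambda)$, and matching it against $f(w)=w-\lambda$ pins down the behavior; approximating a general nowhere-vanishing $f$ by products of such factors and invoking continuity completes the argument.

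The principal obstacle, and where I expect the real work to lie, is justifying the approximation step rigorously, because products of $H^p$ functions need not stay in $H^p$, so the hypothesis "$F(fg)=F(f)F(g)$ whenever $fg\in H^p$" must be applied with care to only those products that remain in the space. I would therefore lean on the boundedness/continuity of $F$ to pass limits through, and on the fact that a nowhere-vanishing $f\in H^p$ admits an approximation by functions whose relevant products stay in $H^p$ (for instance bounded nowhere-vanishing approximants, or dilations $f_r(z)=f(rz)$ which lie in $H^\infty$ and converge to $f$ in $H^p$ as $r\to 1^-$). Using dilations $f_r\in H^\infty\subset H^p$, each $f_r$ is bounded and nowhere-vanishing, products stay in $H^p$, so the multiplicative machinery applies cleanly to give $F(f_r)=f_r(w)=f(rw)$; letting $r\to 1^-$ and using continuity of $F$ and of point evaluation yields $F(f)=f(w)$, which is the desired conclusion.
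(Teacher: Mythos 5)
There is a genuine gap at the heart of your argument, and it is exactly where the real content of the theorem lies. Your plan hinges on the claim that $F(z-\lambda)=w-\lambda$ for $|\lambda|\ge 1$ (so that, by multiplicativity, $F(p)=p(w)$ for every polynomial $p$ with no zeros in $\overline{\bb D}$). But $F$ is only assumed multiplicative, not additive, so knowing $F(z)=w$ tells you nothing about $F(z-\lambda)$; the phrase ``matching it against $f(w)=w-\lambda$ pins down the behavior'' is an assertion, not a derivation. All the hypotheses give you directly is $F(z-\lambda)=a(\lambda)-\lambda$ for some $a(\lambda)\in\bb D$, and playing with products, powers, and the image condition (as one can check) does not elementarily force $a(\lambda)=w$. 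Passing from multiplicativity plus spectrum-containment to additive (character-like) behavior on invertible elements is precisely the hard step; the paper gets it by restricting $F$ to the disc algebra $A(\bb D)$ and invoking Maouche's theorem (Theorem \ref{Mao}), which produces a character $\psi_F$ agreeing with $F$ on the principal component $G_1(A(\bb D))$ of the invertible group, then showing $G_1(A(\bb D))=G(A(\bb D))$ and that characters of $A(\bb D)$ are point evaluations at points of $\overline{\bb D}$ (the function $z+2$ together with $F(z+2)\in Im(z+2)$ then pushes the evaluation point into the open disc). Without Maouche's theorem, a multiplicative version of GKZ, or an equivalent substitute (e.g., an exponential/one-parameter-subgroup argument rebuilding that machinery), your sketch is circular at this step.

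The surrounding scaffolding in your proposal is sound and in fact mirrors the paper: the computation $F(1)=1$ and $F(c)=c$ on constants is correct, the observation $w:=F(z)\in Im(z)=\bb D$ is a legitimate (and slightly different) way to locate $w$ in the open disc, and your endgame is exactly the paper's: for nowhere-vanishing $f\in H^p(\bb D)$, the dilations $f_r(z)=f(rz)$ are invertible in $A(\bb D)$ (indeed bounded and bounded away from zero on $\overline{\bb D}$), they converge to $f$ in $H^p$, and continuity of $F$ finishes the proof; your further reduction of each $f_r$ to a uniform limit of polynomials non-vanishing on $\overline{\bb D}$ would also work \emph{if} you knew $F$ were a point evaluation on such polynomials. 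One smaller caution: your early remark that $F(f)F(1/f)=1$ requires $1/f\in H^p(\bb D)$, which is not automatic for nowhere-vanishing $f\in H^p(\bb D)$ (think of a singular inner function); it is valid only for the bounded, bounded-below approximants, which is another reason the argument must route through an algebra like $A(\bb D)$ or $H^\infty(\bb D)$ where inverses are available.
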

\begin{proof} We consider the restriction of the map $F$ to the disc algebra $A(\bb D)$ (a subset of $H^p(\bb D)$ ). The restriction of the map $F$ to $A(\bb D)$ (equipped with the sup-norm) satisfies the hypotheses of Theorem \ref{Mao}. As a result, there exists a character $\psi_F$ on $A(\bb D)$ such that $F(f)=\psi_F(f)$ for every $f\in G_1(A(\bb D)),$ the connected component of the set of invertible functions in $A(\bb D)$ containing the constant function $1$. But, $G_1(A(\bb D))$ equals the set of all invertible functions in $A(\bb D)$. Indeed, a simple adaptation of the arguments used in \cite[Lemma 3.5.14]{murphy1990c} can be used to establish this fact. Therefore, we conclude that $F(f)=\psi_F(f)$ for each invertible function in $A(\bb D).$  

Furthermore, the characters on the disc algebra are point evaluations, which implies that there exists a point $w\in \overline{\bb D}$ such that 
$F(f)=\psi_F(f)=f(w)$ for every invertible $f\in A(\bb D).$ Now, $g(z)=z+2$ is an invertible function in $A(\bb D);$ therefore, $F(g)=\psi_F(g)=w+2.$ On the other hand, by the hypotheses, $F(g)=a+2$ for some $a\in \bb D$. Thus, we conclude that $w=a\in \bb D.$  

Finally, let $f\in H^p(\bb D) $ with no zero in $\bb D$. For $0<r<1$, let us define $f_r(z)=f(rz), \ z\in \overline{\bb{D}}$. Then each $f_r$ is in $H^p(\bb D)$ and $f_r\to f$ in $H^p(\bb D)$ as $r\to 1.$ 
Since $f$ is analytic on $\bb D$ and has no zero in $\bb D$, each $f_r$ is an invertible function in $A(\bb D)$. Hence, 
$F(f_r)=\psi_F(f_r)=f_r(w)$ for every $r.$ Lastly, the continuity of $F$ yields $F(f)=\lim\limits_{r\to {1^-}}F(f_r)=\lim\limits_{r\to {1^-}}f_r(w)=f(w).$ This completes the proof. 
\end{proof}

\subsection*{Acknowledgements} We thank the Mathematical Sciences Foundation, Delhi, for the support and facilities needed to complete the present work. The first named author thanks the University Grants Commission(UGC), India, for the support, and the second named author thanks Shiv Nadar Institution of Eminence for partially supporting this research.

\bibliographystyle{plain}

\bibliography{References1}
\end{document}